\theoremstyle{plain}
\newtheorem{theorem}{Theorem}[section]
\newtheorem{corollary}[theorem]{Corollary}
\newtheorem{proposition}[theorem]{Proposition}
\newtheorem{lemma}[theorem]{Lemma}
\newtheorem*{theorem*}{Theorem}
\newtheorem*{lemma*}{Lemma}
\newtheorem*{proposition*}{Proposition}
\newtheorem*{corollary*}{Corollary}
\theoremstyle{definition}
\newtheorem{definition}[theorem]{Definition}
\newtheorem{example}[theorem]{Example}
\newtheorem{remark}[theorem]{Remark}
\newtheorem*{definition*}{Definition}
\newtheorem*{example*}{Example}
\newtheorem*{remark*}{Remark}
\numberwithin{equation}{section}
\DeclareSymbolFont{fouriersymbols}{FMS}{futm}{m}{n}
\DeclareSymbolFont{fourierlargesymbols}{FMX}{futm}{m}{n}
\DeclareMathDelimiter{\VERT}{\mathord}{fouriersymbols}{152}{fourierlargesymbols}{147}
\title{An extension of the Wiener-Wintner ergodic theorem for pointwise jointly ergodic systems and its applications}
\date{\today}
\begin{document}

\author[1]{Michihiro Hirayama\thanks{hirayama@math.tsukuba.ac.jp}}
\affil[1]{Department of Mathematics, University of Tsukuba,  Japan}

\author[2]{Younghwan Son\thanks{yhson@postech.ac.kr}}
\affil[2]{Department of Mathematics,  POSTECH, Korea}

\maketitle

\begin{abstract}
A joint measure-preserving system is $(X, \mathcal{B}, \mu_{1}, \dots, \mu_{k}, T_{1}, \dots, T_{k})$, where each $(X, \mathcal{B}, \mu_{i}, T_{i})$ is a measure-preserving system and any $\mu_{i}$ and $\mu_{j}$ are mutually absolutely continuous probability measures. 
Such a system is called pointwise jointly ergodic if, for any set of bounded measurable functions $f_{1}, \dots, f_{k}$ on $X$, the multilinear ergodic average of their joint action under the transformations $T_{1}, \dots, T_{k}$ converges almost everywhere to the product of their integrals with respect to the corresponding measures.

In this paper, we extend the classical Wiener-Wintner ergodic theorem to the setting of pointwise jointly ergodic systems with nilsequences weight.
Additionally, we provide applications that include results on the mean convergence of weighted ergodic averages and the almost everywhere convergence of ergodic averages taken along subsequences of the form $\lfloor \alpha n \rfloor$, where $\alpha \geq 1$.   
\end{abstract}

\section{Introduction}

The classical Wiener-Wintner ergodic theorem \cite{Wiener-Wintner1941} states that for any measure-preserving system $(X, \mathcal{B}, \mu, T)$ and any $f \in L^{1}(X,\mu)$, there exists a set $X_{f}\subset X$ of full $\mu$-measure such that the weighted ergodic averages
\begin{equation}\label{classical_ww_ergodic_average}
\frac{1}{N} \sum_{n=1}^{N} \mathrm{e}(n\theta) f(T^{n}x)
\end{equation}
converge as $N \to \infty$ for every $x \in X_{f}$ and for every $\theta \in \mathbb{R}$, where $\mathrm{e}(\theta) = \exp{(2 \pi \sqrt{-1} \theta)}$.  
Since the full measure set $X_{f}$ is independent of $\theta$, the Wiener-Wintner theorem is nontrivial strengthening of the Birkhoff ergodic theorem, which corresponds to the special case when $\theta =0$.


There are various possible extensions of this result. 
One natural extension is to generalize the sequence of weights.
Lesigne \cite{Lesigne1990} showed that the weight $\mathrm{e}(n\theta)$ can be replaced with $\mathrm{e}(P(n))$, where $P(n)$ is a real polynomial, and later Host and Kra \cite{Host-Kra2009} generalized the Wiener-Wintner theorem to the class of nilsequences.

There have also been results concerning multilinear Wiener-Winter Theorem. 
Consider the following weighted ergodic averages 
\begin{equation}
\label{eqn:multilinear}
\frac{1}{N} \sum_{n=1}^{N} a_{n} \prod_{i=1}^k f_{i} (T^{in} x)
\end{equation} 
where $(a_{n})_{n \in \mathbb{N}}$ is a sequence and $f_{1}, \dots, f_{k} \in L^{\infty} (X, \mu)$. 
A measure-preserving system $(X, \mathcal{B}, \mu, T)$ is said to satisfy $k$-linear pointwise convergence if, when $a_{n} \equiv 1$, the averages of \eqref{eqn:multilinear} converge almost everywhere for any $f_{1}, \dots, f_{k} \in L^{\infty} (X, \mu)$.
It is a long standing conjecture that any measure-preserving system satisfies $k$-linear pointwise convergence. 
Bourgain established this conjecture for the case $k=2$ in \cite{Bourgain1990}.  

Based on Bourgain's result, Assani and Moore \cite{Assani-Moore2018} demonstrated that for any measure-preserving system $(X, \mathcal{B}, \mu, T)$ and any $f_{1}, f_{2} \in L^{\infty}(X,\mu)$, there exists a set $X_{f_{1}, f_{2}}\subset X$ of full $\mu$-measure such that for any $a_{n} = \mathrm{e}(P(n))$, where $P(n)$ is a real polynomial, the averages of \eqref{eqn:multilinear} converge for all $x \in X_{f_{1}, f_{2}}$. 
Xiao \cite{Xiao2024} and Zorin-Kranich \cite{Zorin-Kranich2015} have shown that if  $(X, \mathcal{B}, \mu, T)$ satisfies $k$-linear pointwise convergence, then for any $f_{1}, \dots, f_{k} \in L^{\infty}(X, \mu)$, there exists a $\mu$-full measure set $ X_{f_{1}, \dots, f_{k}} \subset X$ such that for any nilsequence $(a_{n})_{n \in \mathbb{N}}$, the averages of\eqref{eqn:multilinear} converge for all $x \in X_{f_{1}, \dots, f_{k}}$.

In this paper, we establish an extension of the Wiener-Wintner theorem for pointwise jointly ergodic systems.  
The notion of joint ergodicity of measure-preserving systems was introduced and studied by Berend and Bergerlson in \cites{Berend-Bergelson1984, Berend-Bergelson1986}. 
Measure-preserving transformations $T_{1}, \dots, T_{k}$ on a probability space $(X, \mathcal{B}, \mu)$ are called \emph{$L^{2}$-jointly ergodic} if, for any bounded functions $f_{1}, \dots, f_{k}\in L^{\infty}(X,\mu)$, the multiple ergodic averages $\frac{1}{N} \sum\limits_{n=1}^{N} \prod\limits_{i=1}^{k} f_{i} \circ T_{i}^{n} $ converge to $\prod\limits_{i=1}^{k} \int_{X} f_{i} \, d \mu$ in $L^{2}(X,\mu)$ as $N \to \infty$. 
In \cites{Bergelson-Son2023, Bergelson-Son2022}, the phenomenon of joint ergodicity was explored for the case where the involved transformations $T_{1}, \dots, T_{k}$ may have different invariant measures.

Let $ {\bm X} = (X, \mathcal{B}, \mu_{1}, \dots, \mu_{k}, T_{1}, \dots, T_{k})$ be a joint measure-preserving system, where each ${\bm X}_{i} = (X, \mathcal{B}, \mu_{i}, T_{i})$ is a measure-preserving system on a Lebesgue probability space, and measures $\mu_{i}$ and $\mu_{j}$ are equivalent, that is, they are mutually absolutely continuous. 

\begin{definition}
A joint measure-preserving system ${\bm X} = (X, \mathcal{B}, \mu_{1}, \dots, \mu_{k}, T_{1}, \dots, T_{k})$ is \emph{pointwise jointly ergodic} if for any bounded measurable functions $f_{1}, \dots, f_{k}\in L^{\infty}(X)$,  
\[ 
\lim_{N \rightarrow \infty} \frac{1}{N} \sum_{n=1}^{N} \prod_{i=1}^{k} f_{i} (T_{i}^{n} x) = \prod_{i=1}^{k} \int_{X} f_{i} \, d \mu_{i}
\]
for almost every $x\in X$.
\end{definition} 

\begin{remark}\label{rem:pje_notation}~
\begin{enumerate}[(1)]
\item Since $\mu_{1}, \mu_{2}, \dots, \mu_{k}$ are equivalent, one has $L^{\infty} (X,\mu_{1}) = L^{\infty} (X,\mu_{2}) = \cdots = L^{\infty} (X,\mu_{k})$. 
Hence we just write $f_{1}, f_{2},\dots, f_{k}\in L^{\infty}(X)$ and say ``bounded measurable functions on $X$". 
In addition, we simply write ``almost every $x\in X$" instead of ``$\mu_{i}$-almost every $x\in X$ for every $i = 1, 2, \dots, k$". 
\item In our definition of joint ergodicity, the transformations are not required to be commuting or invertible and can have different invariant measures. However, if they do commute, then $\mu_{1} = \mu_{2} = \dots = \mu_{k}$; see Theorem \ref{th:coincidence} below. 
\item If $\mu_{1} = \mu_{2} = \dots = \mu_{k}$, then, letting $\mu$ be the common probability measure, we say that $T_{1}, T_{2},\dots, T_{k}$ are pointwise jointly ergodic on $(X, \mathcal{B}, \mu)$.
\end{enumerate}
\end{remark}
In Section \ref{subsec:PJE}, we provide examples of pointwise jointly ergodic systems.

We now state our main result of this paper which extends the classical Wiener-Wintner theorem to pointwise jointly ergodic systems with nilsequences weight. 

\begin{theorem}\label{th:mlww_pje_nil}
Suppose that a joint measure-preserving system ${\bm X}= (X, \mathcal{B}, \mu_{1}, \dots, \mu_{k}, T_{1}, \dots, T_{k})$ is pointwise jointly ergodic. 
Then, for any bounded measurable functions $f_{1}, \dots, f_{k}$ on $X$, there exists a full measure set $X_{f_{1}, \dots, f_{k}} \subset X$ such that the averages
\[ 
\frac{1}{N} \sum_{n=1}^{N} b_{n}\cdot f_{1} (T_{1}^{n} x) \cdots f_{k}(T_{k}^{n} x) 
\]
converge as $N\to \infty$ for every $x \in X_{f_{1}, \dots, f_{k}}$ and every nilsequence $(b_{n})_{n \in \mathbb{N}}$.
\end{theorem}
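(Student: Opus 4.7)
The plan is to follow the scheme pioneered by Host--Kra in the linear case and extended by Zorin-Kranich and Xiao to the multilinear setting: realise the nilsequence weight as an observable on an auxiliary nilmanifold, use a product-system construction to bring it under the umbrella of pointwise joint ergodicity, and then uniformise over nilsequences via separability. The only place where the hypothesis of this paper enters is in verifying pointwise convergence on the extended system.

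Fix bounded measurable $f_1,\ldots,f_k$ on $X$. Every nilsequence has the form $b_n = \Phi(g^n \Gamma)$ for some nilmanifold $Y = G/\Gamma$, some $g \in G$, and some $\Phi \in C(Y)$; after passing to the orbit closure $\overline{\{g^n \Gamma\}}$, I may assume the nilrotation $R_g : y \mapsto gy$ is uniquely ergodic on $(Y,m)$ with $m$ the Haar measure. I would then form the extended joint measure-preserving system
$$\tilde{\bm X} = (X \times Y, \tilde{\mathcal B}, \mu_1 \times m, \ldots, \mu_k \times m, T_1 \times R_g, \ldots, T_k \times R_g)$$
and, as the crucial technical step, verify that $\tilde{\bm X}$ is itself pointwise jointly ergodic. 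Granting this, applying the definition to $F_1 = f_1 \otimes \Phi$ and $F_i = f_i \otimes 1$ for $i \geq 2$ yields a set of full $\tilde{\mu}_1$-measure in $X \times Y$ on which
$$\frac{1}{N}\sum_{n=1}^N \Phi(g^n y)\prod_{i=1}^k f_i(T_i^n x) \;\longrightarrow\; \int_Y \Phi\, dm \cdot \prod_{i=1}^k \int_X f_i\, d\mu_i.$$

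To handle all nilsequences simultaneously I would use separability of $C(Y)$ together with the uniform bound
$$\Bigl|\tfrac{1}{N}\sum_n (\Phi-\Phi')(g^n y)\prod_i f_i(T_i^n x)\Bigr| \leq \|\Phi-\Phi'\|_\infty \prod_{i=1}^k \|f_i\|_\infty,$$
which upgrades convergence on a countable dense subset of $C(Y)$ to convergence on all of $C(Y)$. Enumerating nilmanifolds by bounded dimension and step produces countably many systems $(Y,g)$ capturing all nilsequences, and a final application of Fubini extracts a single $\mu$-null exceptional set $X\setminus X_{f_1,\ldots,f_k}$ from the intersection of the full-measure sets inside $X \times Y$.

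The main obstacle is promoting pointwise joint ergodicity from ${\bm X}$ to $\tilde{\bm X}$. This is not automatic from pointwise joint ergodicity of the base together with unique ergodicity of $R_g$, because the \emph{same} nilrotation is twisted onto every coordinate. Two strategies seem plausible: (i) expand an arbitrary observable on $Y$ in a nilmanifold Fourier-type basis and reduce to weighted averages $\frac{1}{N}\sum_n \chi(n)\prod_i f_i(T_i^n x)$ for countably many ``characters'' $\chi$, each of which is controlled by combining pointwise joint ergodicity of ${\bm X}$ with a van der Corput / Herglotz--Bochner oscillation estimate; (ii) a direct Wiener--Wintner-style Cauchy argument, first showing that the differences of the weighted averages vanish a.e.\ and only afterward identifying the limit. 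Since the theorem only asserts convergence, the identification of the limit can also be postponed and carried out separately by the Birkhoff-type product formula once a.e.\ convergence is in hand.
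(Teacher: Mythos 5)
Your central technical claim --- that the extended system $\tilde{\bm X}=(X\times Y,\ \mu_{1}\times m,\dots ,\mu_{k}\times m,\ T_{1}\times R_{g},\dots ,T_{k}\times R_{g})$ is again pointwise jointly ergodic --- is not merely ``not automatic''; it is false in general, and with it the displayed limit formula. Already for $k=1$ it would assert that $T_{1}\times R_{g}$ is ergodic for the product measure, which fails whenever $T_{1}$ and the nilrotation share a nontrivial eigenvalue: take $T_{1}$ and $R_{g}$ both equal to rotation by the same irrational $\alpha$ on the circle, $f_{1}(x)=\mathrm{e}(x)$, $\Phi(y)=\mathrm{e}(-y)$; then $\frac{1}{N}\sum_{n=1}^{N}\Phi(g^{n}y)f_{1}(T_{1}^{n}x)=\mathrm{e}(x-y)\neq 0=\int_{Y}\Phi\,dm\cdot\int_{X}f_{1}\,d\mu_{1}$. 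The entire difficulty of a Wiener--Wintner theorem is precisely that the weight can resonate with the nilfactor of the system, so no route that forces the limit to be the product of integrals can succeed, and postponing ``identification of the limit'' does not help: you still need a substitute for joint ergodicity of $\tilde{\bm X}$ to get a.e.\ convergence on the product, and neither of your two sketched strategies is carried out. There is a second, independent gap at the end: Fubini applied to a full-measure subset of $X\times Y$ gives, for a.e.\ $x$, convergence for $m$-a.e.\ $y$ only, whereas a nilsequence is $\Phi(g^{n}y_{0})$ for one \emph{fixed} base point $y_{0}$, which may lie in the exceptional slice. This is exactly the ``for every $\theta$'' versus ``for a.e.\ $\theta$'' issue that makes even the classical theorem nontrivial, and uniformity in $y$ is not available here.

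The paper avoids the product construction entirely. It inducts on $k$, the base case being the Host--Kra nilsequence Wiener--Wintner theorem, and in the inductive step decomposes $f_{1}=u^{(r)}+v^{(r)}+f^{(r)}_{\mathrm{sml}}$ via the Host--Kra structure theorem. The component $u^{(r)}$ makes $(b_{n}\,u^{(r)}(T_{1}^{n}x))_{n}$ again a nilsequence, which is absorbed into the weight and handled by the induction hypothesis applied to $f_{2},\dots ,f_{k}$; the component $v^{(r)}$, with vanishing Host--Kra seminorm $\VERT v^{(r)}\VERT_{m,T_{1}}=0$, is killed by the local seminorms of Host--Kra on $\ell^{\infty}$ --- and this is the one place pointwise joint ergodicity is used, namely to show that the sequence $a_{n}(x)=\prod_{i}f_{i}(T_{i}^{n}x)$ satisfies property $\mathcal{P}(m)$ with $\|a(x)\|_{m}=\VERT f_{1}\otimes\cdots\otimes f_{k}\VERT_{m,T_{1}\times\cdots\times T_{k}}$; the small component is controlled by the maximal ergodic inequality and Borel--Cantelli. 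If you wish to rescue your plan, the claim ``$\tilde{\bm X}$ is pointwise jointly ergodic'' must be replaced by a structural statement of this kind; the van der Corput estimate you allude to in strategy (i) only disposes of the non-resonant part, which is exactly what the structure theorem is needed to isolate.
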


Now, we discuss several applications of Theorem \ref{th:mlww_pje_nil}. 
Firstly, this result allows us to obtain an application to the mean convergence of weighted ergodic averages.  
A bounded sequence $(w_{n})_{n \in \mathbb{N}}$ in $\mathbb{C}$ is called a \emph{universally good weight for the mean ergodic theorem} if for any system $(Y, \mathcal{D}, \nu,S)$ and any $\varphi \in L^{2}(Y,\nu)$, the limit 
\begin{equation}\label{def:good_weight}
\lim_{N\to \infty}\frac{1}{N} \sum_{n=1}^{N} w_{n}\cdot \varphi \circ S^{n}
\end{equation}
exists in $L^{2}(Y,\nu)$. 
As a consequence of the Wiener-Winter ergodic theorem \eqref{classical_ww_ergodic_average}, one can deduce that for every measure-preserving system $(X,\mathcal{B},\mu,T)$ and $f\in L^{1}(X,\mu)$, the sequence $(f(T^{n}x))_{n \in \mathbb{N}}$, with $x\in X_{f}$, is universally good weight for the mean ergodic theorem. 

Host and Kra \cite{Host-Kra2009} and subsequently Chu \cite{Chu2009} have generalized this result for multiple ergodic averages. 
By an \emph{integer polynomial} we mean a polynomial taking integer values on the integers. 
A bounded sequence $(w_{n})_{n \in \mathbb{N}}$ in $\mathbb{C}$ is called a \emph{universally good weight for the multiple polynomial ergodic theorem} if for any measure-preserving system $(Y, \mathcal{D}, \nu,S)$, $\varphi_{1},\dots \varphi_{r} \in L^{\infty}(Y,\nu)$ and all integer polynomials $p_{1},\dots ,p_{r}$, the limit 
\begin{equation}\label{def:good_weight_multi_poly}
\lim_{N\to \infty}\frac{1}{N} \sum _{n=1}^{N} w_{n}\cdot S^{p_{1}(n)}\varphi_{1}\cdots S^{p_{r}(n)}\varphi_{r}
\end{equation}
exists in $L^{2}(Y,\nu)$. 
For the case of $p_{\ell}(n)=\ell n$, it is called universally good weight for the multiple ergodic theorem. 
(Hence \eqref{def:good_weight} corresponds to the case when $r=1$ and $p_{1}(n)=n$ in \eqref{def:good_weight_multi_poly}.)
In \cite{Host-Kra2009}, Host and Kra showed that for every measure-preserving system $(X,\mathcal{B},\mu,T)$, $f\in L^{1}(X,\mu)$, and for almost every $x\in X$, the sequence $(f(T^{n}x))_{n \in \mathbb{N}}$ is universally good weight for the multiple ergodic theorem as a corollary of their generalized Wiener-Winter ergodic theorem. 
Subsequently, Chu \cite{Chu2009} proved that for almost every $x\in X$ the sequence $(f(T^{n}x))_{n \in \mathbb{N}}$ is universally good weight for the multiple polynomial ergodic theorem. 

In \cite{Chu2009}, Chu has given a convergence criterion for a bounded sequence to be a good weight for the multiple polynomial ergodic theorem. 

\begin{proposition*} \cite{Chu2009}*{Theorem 1.3} \label{prop:chu_criterion}
Given $r, d\in \mathbb{N}$, there exists an integer $m\in \mathbb{N}$ such that the following property holds: 
for a bounded sequence $(w_{n})_{n}$ in $\mathbb{C}$, if the limit 
\[ 
\lim_{N\to \infty}\frac{1}{N} \sum_{n=1}^{N} b_{n} w_{n}
\] 
exists for every $m$-step nilsequence $(b_{n})_{n}$, then for every measure-preserving system $(Y,\mathcal{D},\nu, S)$, $\varphi_{1},\dots \varphi_{r} \in L^{\infty}(Y,\nu)$, and all integer polynomials $p_{1},\dots ,p_{r}$ of degree at most $d$, the limit 
\[
\lim_{N\to \infty}\frac{1}{N} \sum _{n=1}^{N} w_{n}\cdot S^{p_{1}(n)}\varphi_{1}\cdots S^{p_{r}(n)}\varphi_{r}
\]
exists in $L^{2}(Y,\nu)$. 
\end{proposition*}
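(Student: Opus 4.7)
The plan is to combine Host-Kra structure theory with Leibman's realization of polynomial orbits as linear orbits on larger nilmanifolds. I would proceed in three steps.

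First, I would establish a \emph{weighted} Host-Kra seminorm estimate: there is an integer $s = s(r,d)$ such that for every bounded weight $(w_n)_n$ with $\sup_n |w_n| \leq 1$, every system $(Y, \mathcal{D}, \nu, S)$, and every $\varphi_1, \ldots, \varphi_r \in L^\infty(Y,\nu)$ with $\|\varphi_i\|_\infty \leq 1$,
\begin{equation*}
\limsup_{N\to\infty} \left\| \frac{1}{N} \sum_{n=1}^{N} w_n \cdot S^{p_1(n)} \varphi_1 \cdots S^{p_r(n)} \varphi_r \right\|_{L^2(\nu)} \leq C \min_{1 \leq i \leq r} \|\varphi_i\|_{U^s(S)}.
\end{equation*}
This follows from the standard PET induction together with van der Corput's lemma: each iteration absorbs the bounded weight and produces a differenced average, and after enough iterations the right-hand side is expressed in terms of an unweighted Host-Kra seminorm. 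From this estimate, by an approximation argument it suffices to prove convergence when each $\varphi_i$ is measurable with respect to the Host-Kra factor $Z_s(Y)$, which is an inverse limit of $s$-step nilsystems.

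Second, by density and dominated convergence I would reduce further to the case where $(Y, \nu, S)$ is itself an $s$-step nilsystem $G/\Gamma$ with $S$ given by left translation by some $g_0 \in G$, and each $\varphi_i$ is continuous. Then for every $y \in Y$,
\begin{equation*}
b_n(y) := \varphi_1\bigl(g_0^{p_1(n)} y\bigr) \cdots \varphi_r\bigl(g_0^{p_r(n)} y\bigr)
\end{equation*}
is the value of the continuous function $\varphi_1 \otimes \cdots \otimes \varphi_r$ at the polynomial orbit $n \mapsto \bigl(g_0^{p_1(n)} y, \ldots, g_0^{p_r(n)} y\bigr)$ on the product nilmanifold $(G/\Gamma)^r$. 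By Leibman's theorem on polynomial orbits, this polynomial orbit can be rewritten as a linear orbit $n \mapsto \widetilde g^{\,n} \widetilde y$ on a larger nilmanifold $\widetilde G / \widetilde \Gamma$ whose nilpotency step depends only on $s$, $r$, and $d$; hence $(b_n(y))_n$ is an $m$-step nilsequence for some $m = m(s, r, d)$ that is uniform in $y$ and in the chosen functions.

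Third, with this value of $m$ the standing hypothesis forces $\frac{1}{N} \sum_{n=1}^{N} w_n b_n(y)$ to converge for every $y$. Since $|b_n(y)| \leq 1$, the dominated convergence theorem yields convergence in $L^2(\nu)$ of the weighted polynomial average on the nilsystem, and the approximation from the first step extends this to the original system. The main obstacle is pinning down an $m$ that is simultaneously uniform in the target system, the functions, and the polynomials: the Host-Kra step $s$ is determined by the PET induction on the tuple of polynomial degrees bounded by $d$, and the Leibman realization then inflates this step to $m(s,r,d)$. Tracking both inflations while keeping everything independent of $(Y, \nu, S)$ is the delicate bookkeeping of the proof.
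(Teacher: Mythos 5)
Note first that the paper itself offers no proof of this proposition: it is quoted verbatim from Chu \cite{Chu2009}*{Theorem 1.3} and used as a black box to deduce Corollary \ref{cor:goodweight_multi_poly}. Your outline is essentially Chu's own argument --- a weighted PET/van der Corput reduction bounding the average by Host--Kra seminorms, passage to the nilfactor and then to continuous functions on a finite-step nilsystem, and Leibman's realization of the polynomial orbit as a linear orbit on a larger nilmanifold so that $\bigl(\varphi_1(g_0^{p_1(n)}y)\cdots\varphi_r(g_0^{p_r(n)}y)\bigr)_n$ is an $m$-step nilsequence with $m$ depending only on $r$ and $d$ --- and the delicate points you flag (uniformity of $m$, the approximation steps, invertibility via the natural extension) are exactly the ones handled there, so the proposal is sound.
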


The following result is an immediate consequence of the proposition above and Theorem \ref{th:mlww_pje_nil}, which is an extension of Chu's result \cite{Chu2009}*{Theorem 1.1} for pointwise jointly ergodic systems.

\begin{corollary} \label{cor:goodweight_multi_poly}
Suppose that ${\bm X} = (X, \mathcal{B}, \mu_{1}, \dots, \mu_{k}, T_{1}, \dots, T_{k})$ is a pointwise jointly ergodic system. 
Let $f_{1},\dots ,f_{k}\in L^{\infty}(X)$. 
Then there exists a full measure set $X_{0}\subset X$ such that for every $x\in X_{0}$, the sequence $\{f_{1} (T_{1}^{n} x) \cdots f_{k}(T_{k}^{n} x)\}_{n}$ is a universally good weight for the multiple polynomial ergodic theorem: for every measure-preserving system $(Y, \mathcal{D}, \nu ,S)$, $\varphi_{1},\dots \varphi_{r} \in L^{\infty}(Y,\nu)$, and all integer polynomials $p_{1},\dots ,p_{r}$, the limit 
\[
\lim_{N\to \infty}\frac{1}{N} \sum _{n=1}^{N} f_{1} (T_{1}^{n} x) \cdots f_{k}(T_{k}^{n} x)\cdot S^{p_{1}(n)}\varphi_{1}\cdots S^{p_{r}(n)}\varphi_{r}
\]
exists in $L^{2}(Y,\nu)$. 
\end{corollary}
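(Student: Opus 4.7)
The plan is to combine Theorem \ref{th:mlww_pje_nil} with Chu's convergence criterion in a straightforward way. The proof should essentially be a one-step deduction, since Theorem \ref{th:mlww_pje_nil} already supplies convergence against arbitrary nilsequences on a single full measure set, which is exactly the input required by Chu's criterion.

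First I would apply Theorem \ref{th:mlww_pje_nil} to the given functions $f_{1}, \dots, f_{k} \in L^{\infty}(X)$ to obtain a full measure set $X_{0} := X_{f_{1}, \dots, f_{k}} \subset X$ such that for every $x \in X_{0}$ and every nilsequence $(b_{n})_{n \in \mathbb{N}}$ (of any step), the weighted average
\[
\frac{1}{N} \sum_{n=1}^{N} b_{n}\, f_{1}(T_{1}^{n} x) \cdots f_{k}(T_{k}^{n} x)
\]
converges as $N \to \infty$. The decisive feature is that the set $X_{0}$ does not depend on $(b_{n})_{n}$ nor on the step of the nilsequence.

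Next, fix $x \in X_{0}$ and set $w_{n} := f_{1}(T_{1}^{n} x) \cdots f_{k}(T_{k}^{n} x)$, which is bounded since each $f_{i}$ is. Given arbitrary $r, d \in \mathbb{N}$, Chu's criterion (the displayed proposition) provides an integer $m = m(r,d)$ with the following property: if $\frac{1}{N}\sum_{n=1}^{N} b_{n} w_{n}$ converges for every $m$-step nilsequence $(b_{n})_{n}$, then $(w_{n})_{n}$ is a good weight for the multiple polynomial ergodic theorem for this particular pair $(r,d)$. Since every $m$-step nilsequence is in particular a nilsequence, the hypothesis is fulfilled by our choice of $X_{0}$. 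As $r$ and $d$ were arbitrary, $(w_{n})_{n}$ is a universally good weight for the multiple polynomial ergodic theorem, and this holds for every $x \in X_{0}$.

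I do not anticipate a genuine obstacle here: the only substantive point is the uniformity of the full measure set $X_{0}$ across all nilsequences, and this is precisely what Theorem \ref{th:mlww_pje_nil} delivers. The role of Chu's criterion is simply to translate the ``$m$-step nilsequence'' hypothesis, which depends on $r$ and $d$, into the desired polynomial multiple convergence, without having to restrict or enlarge $X_{0}$ depending on $(r,d)$.
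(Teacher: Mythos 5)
Your proposal is correct and matches the paper's intended argument: the paper states the corollary as an immediate consequence of Chu's criterion and Theorem \ref{th:mlww_pje_nil}, exactly the one-step deduction you describe. The key point you correctly identify --- that the full measure set from Theorem \ref{th:mlww_pje_nil} is uniform over all nilsequences of every step, so no dependence on $(r,d)$ arises --- is precisely why the paper treats this as immediate.
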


Secondly, Theorem \ref{th:mlww_pje_nil} also allows us to investigate almost everywhere convergence of the averages of the form
\begin{equation} \label{eq:average_alphan}
 \frac{1}{N} \sum_{n=1}^{N} \prod_{i=1}^{k} f_{i} (T_{i}^{\lfloor{ \alpha n\rfloor}} x)
\end{equation}
when ${\bm X} = (X, \mathcal{B}, \mu_{1}, \dots, \mu_{k}, T_{1}, \dots, T_{k})$ is pointwise jointly ergodic. 
Here, for $r\in \mathbb{R}$, let $\lfloor r\rfloor$ denote the greatest integer less than or equal to $r$. 
Note that although weights do not explicitly appear in the averages \eqref{eq:average_alphan}, these can be rewritten as $ \frac{1}{N} \sum\limits_{n=1}^{ \lfloor{ \alpha N \rfloor} } \mathbbm{1}_{A} (n) \prod\limits_{i=1}^{k} f_{i} (T_{i}^{n}x)$, where $\mathbbm{1}_{A}$ is the indicator function of the set $A = \{ \lfloor \alpha n\rfloor \colon n \in \mathbb{Z} \}$. 
Since $ \mathbbm{1}_{A} (n)$ can be approximated by trigonometric polynomials (see Lemma \ref{lem:Bap_modify} below), we apply (a specific case of) Theorem \ref{th:mlww_pje_nil} to derive Theorems \ref{thm:multiple_bap}, \ref{thm:equiv_multiple_bap} and Corollary \ref{cor:commutingPJE_TPJE} below.

\begin{theorem}
\label{thm:multiple_bap}
Let $\alpha \geq 1$ be a real number.
Suppose that ${\bm X} = (X, \mathcal{B}, \mu_{1}, \dots, \mu_{k}, T_{1}, \dots, T_{k})$ is pointwise jointly ergodic. 
Then for any bounded measurable functions $f_{1}, \dots, f_{k}$ on $X$, the averages
\begin{equation}
\label{eq:subsequence}
 \frac{1}{N} \sum_{n=1}^{N} \prod_{i=1}^{k} f_{i} (T_{i}^{\lfloor{ \alpha n\rfloor}} x)
\end{equation}
converge almost everywhere.
\end{theorem}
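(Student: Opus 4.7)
The plan is to reduce the problem to a weighted ergodic average with trigonometric weights, so that Theorem \ref{th:mlww_pje_nil} can be applied directly. Writing $A = \{\lfloor \alpha n \rfloor : n \in \mathbb{N}\}$, the first step is the rewriting suggested after the statement:
\[
\frac{1}{N}\sum_{n=1}^{N} \prod_{i=1}^{k} f_{i}(T_{i}^{\lfloor \alpha n\rfloor} x)
= \frac{\lfloor \alpha N\rfloor}{N}\cdot\frac{1}{\lfloor \alpha N\rfloor}\sum_{m=1}^{\lfloor \alpha N\rfloor} \mathbbm{1}_{A}(m)\prod_{i=1}^{k} f_{i}(T_{i}^{m} x).
\]
Since $\lfloor \alpha N\rfloor/N\to\alpha$, it suffices to prove almost everywhere convergence of the inner average as $M:=\lfloor \alpha N\rfloor\to\infty$.

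Next I would invoke the Bourgain-type approximation Lemma \ref{lem:Bap_modify} referenced in the introduction: for every $\varepsilon>0$ write $\mathbbm{1}_{A} = P_{\varepsilon}+R_{\varepsilon}$, where $P_{\varepsilon}(n)=\sum_{j=1}^{J_{\varepsilon}} c_{j}\,\mathrm{e}(n\theta_{j})$ is a trigonometric polynomial and the remainder satisfies $\limsup_{M\to\infty}\frac{1}{M}\sum_{m=1}^{M}|R_{\varepsilon}(m)|<\varepsilon$. Since each $(\mathrm{e}(n\theta_{j}))_{n}$ is a $1$-step nilsequence, Theorem \ref{th:mlww_pje_nil} supplies a single full-measure set $X_{f_{1},\dots,f_{k}}$ such that, for every $x$ in this set and every $\theta\in\mathbb{R}$, the average $\frac{1}{M}\sum_{m=1}^{M}\mathrm{e}(m\theta)\prod_{i=1}^{k}f_{i}(T_{i}^{m}x)$ converges; taking finite linear combinations, so does the average weighted by $P_{\varepsilon}$, for every $\varepsilon>0$.

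Finally I would close with a standard $3\varepsilon$-argument on $X_{f_{1},\dots,f_{k}}$. Setting $C=\prod_{i}\|f_{i}\|_{\infty}$, the trivial bound gives
\[
\limsup_{M\to\infty}\left|\frac{1}{M}\sum_{m=1}^{M}R_{\varepsilon}(m)\prod_{i=1}^{k} f_{i}(T_{i}^{m} x)\right|\le C\varepsilon,
\]
so the Cesàro averages weighted by $\mathbbm{1}_{A}$ differ from the convergent $P_{\varepsilon}$-averages by at most $C\varepsilon$ in $\limsup-\liminf$. Letting $\varepsilon\to 0$ yields convergence at every $x\in X_{f_{1},\dots,f_{k}}$, which combined with the prefactor $\lfloor\alpha N\rfloor/N\to\alpha$ concludes the proof.

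The main obstacle is not the ergodic-theoretic step, which is clean once Theorem \ref{th:mlww_pje_nil} is in hand, but rather the quality of the approximation of $\mathbbm{1}_{A}$ by trigonometric polynomials: one needs control of the error in the Cesàro $\ell^{1}$-norm uniformly in $M$, which comes from the equidistribution of the Beatty sequence $(\lfloor\alpha n\rfloor)_{n}$ modulo $1$. Since this is packaged in Lemma \ref{lem:Bap_modify}, the deduction above is otherwise a routine Wiener-Wintner style transference; note that the case $\alpha=1$ is just the definition of pointwise joint ergodicity, so the content of the theorem is precisely the passage from $n$ to $\lfloor\alpha n\rfloor$ via Fourier approximation.
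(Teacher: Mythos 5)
Your proposal is correct and follows essentially the same route as the paper: rewrite the average via the identity $\frac{1}{N}\sum_{n\le N}F_{\lfloor\alpha n\rfloor}=\frac{\lfloor\alpha N\rfloor}{N}\cdot\frac{1}{\lfloor\alpha N\rfloor}\sum_{m\le\lfloor\alpha N\rfloor}\mathbbm{1}_{A}(m)F_{m}$, approximate $\mathbbm{1}_{A}$ in the Ces\`aro $\ell^{1}$ sense by trigonometric polynomials using Besicovitch almost periodicity, apply Theorem \ref{th:mlww_pje_nil} for the $1$-step nilsequences $(\mathrm{e}(n\theta))_{n}$ on a single full-measure set, and close with the triangle-inequality/Cauchy argument. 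The only cosmetic difference is that the paper's proof of this theorem invokes only the general Besicovitch almost periodicity of $\mathbbm{1}_{A}$ (reserving Lemma \ref{lem:Bap_modify}, with its restricted frequencies $m/\alpha$, for Theorem \ref{thm:equiv_multiple_bap}), and the relevant equidistribution is that of $(n/\alpha)$ mod $1$ rather than of the Beatty sequence itself.
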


Moreover, we characterize the condition of $\alpha\geq 1$ when the limit of \eqref{eq:subsequence} is the ``natural" one, namely the product of the averages $ \prod\limits_{i=1}^{k} \int_{X} f_{i} \, d \mu_{i}$. 
Before formulating this result, let us introduce the following notation. 
Let $E(T)$ denote the set of eigenvalues of a measure-preserving transformation $T$. 
For an ergodic measure-preserving transformation $T$, it is known that $E(T)$ forms a countable multiplicative subgroup of $\mathbb{T} \,(:= \{ z \in \mathbb{C} \colon |z| =1  \})$. 
For a joint measure-preserving system $ {\bm X} = (X, \mathcal{B}, \mu_{1}, \dots, \mu_{k}, T_{1}, \dots, T_{k})$, we define $E(T_{1}, \dots, T_{k})$ as 
\[
E(T_{1}, \dots, T_{k}) = \left\{ \lambda \in \mathbb{T} \colon \lambda = \prod_{i=1}^{k} \lambda_{i}, \ \lambda_{i} \in E(T_{i})\right\}.
\]
It is known that $E(T_{1}, \dots, T_{k}) = E (T_{1} \times \cdots \times T_{k})$; see \cite{Furstenberg1981}*{Lemma 4.18}.

\begin{theorem}
\label{thm:equiv_multiple_bap}
Suppose that ${\bm X} = (X, \mathcal{B}, \mu_{1}, \dots, \mu_{k}, T_{1}, \dots, T_{k})$ is pointwise jointly ergodic.
Let $\alpha \geq 1$. 
Then the following are equivalent.
\begin{enumerate}[{\rm (1)}]
\item $E (T_{1}, \dots, T_{k}) \bigcap \{ \mathrm{e} (m/\alpha)\colon m \in \mathbb{Z} \}= \{1\}$. 
\item For any bounded measurable functions $f_{1}, \dots, f_{k}$ on $X$, 
\[ 
\lim_{N \rightarrow \infty} \frac{1}{N} \sum_{n=1}^{N}  f_{1}(T_{1}^{\lfloor \alpha n\rfloor} x) \cdots f_{k} (T_{k}^{\lfloor \alpha n\rfloor} x) = \prod_{i=1}^{k} \int_{X} f_{i} \, d\mu_{i}
\]
for almost every $x\in X$.
\end{enumerate}
\end{theorem}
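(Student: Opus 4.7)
The plan is to exploit the Fourier expansion of $\mathbbm{1}_A$ where $A = \{\lfloor \alpha n\rfloor \colon n \in \mathbb{N}\}$. After the reindexing
\[
\frac{1}{N}\sum_{n=1}^N \prod_{i=1}^k f_i(T_i^{\lfloor\alpha n\rfloor} x) = \frac{1}{N}\sum_{j=1}^{\lfloor \alpha N\rfloor} \mathbbm{1}_A(j) \prod_{i=1}^k f_i(T_i^j x),
\]
Lemma \ref{lem:Bap_modify} approximates $\mathbbm{1}_A$ by trigonometric polynomials in the frequencies $m/\alpha$, $m \in \mathbb{Z}$, with constant coefficient equal to the density $1/\alpha$ of $A$ and with $m$-th coefficient vanishing whenever $m\neq 0$ and $m/\alpha \in \mathbb{Z}$. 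The problem thus reduces, up to a controllable error, to understanding $\lim_M \frac{1}{M}\sum_{j=1}^M \mathrm{e}(mj/\alpha)\prod_i f_i(T_i^j x)$ for each fixed $m \in \mathbb{Z}$; such limits exist almost everywhere on a single full-measure set by Theorem \ref{th:mlww_pje_nil} applied with the $1$-step nilsequence $b_n = \mathrm{e}(mn/\alpha)$. For the direction (1) $\Rightarrow$ (2), one identifies each limit: for $m = 0$ pointwise joint ergodicity yields $\prod_i \int f_i\, d\mu_i$; for $m \neq 0$ with $m/\alpha \notin \mathbb{Z}$, hypothesis (1) gives $\mathrm{e}(m/\alpha) \notin E(T_1,\dots,T_k)$ and the limit vanishes almost everywhere. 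Combined with the scaling $\lfloor \alpha N\rfloor/N \to \alpha$, the surviving $m=0$ contribution is exactly $\prod_i \int f_i\,d\mu_i$.

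For (2) $\Rightarrow$ (1), argue by contrapositive: suppose $\mathrm{e}(m_0/\alpha) \in E(T_1,\dots,T_k) \setminus \{1\}$ for some $m_0 \in \mathbb{Z}$, and write $\mathrm{e}(m_0/\alpha) = \lambda_1\cdots\lambda_k$ with $\lambda_i \in E(T_i)$. Each $T_i$ is ergodic (take the remaining $f_j \equiv 1$ in the definition of pointwise joint ergodicity), so select eigenfunctions $g_i \in L^\infty(X)$ with $|g_i| = 1$ and $g_i \circ T_i = \lambda_i g_i$. Using $\mathrm{e}(m_0 n) = 1$ for $n \in \mathbb{Z}$, a direct computation gives
\[
\prod_{i=1}^k g_i(T_i^{\lfloor\alpha n\rfloor} x) = \mathrm{e}\!\left(-\frac{m_0\{\alpha n\}}{\alpha}\right) \prod_{i=1}^k g_i(x),
\]
and the scalar Ces\`aro limit $L = \lim_N \frac{1}{N}\sum_{n=1}^N \mathrm{e}(-m_0\{\alpha n\}/\alpha)$ is nonzero: for irrational $\alpha$, Weyl's equidistribution gives $L = \int_0^1 \mathrm{e}(-m_0 t/\alpha)\,dt = (1-\mathrm{e}(-m_0/\alpha))/(2\pi i m_0/\alpha) \neq 0$ since $\mathrm{e}(m_0/\alpha) \neq 1$; for rational $\alpha = p/q$ in lowest terms one has $p \nmid m_0$, and a finite geometric sum gives $L = \frac{1}{q}\sum_{j=0}^{q-1} \mathrm{e}(-m_0 j/p) \neq 0$. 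Meanwhile $\prod_i \int g_i\,d\mu_i = 0$ since at least one $\lambda_i \neq 1$ forces the corresponding eigenfunction to have zero mean. Hence the actual limit $L \prod_i g_i(x)$ is nonzero almost everywhere, contradicting (2) for $f_i = g_i$.

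The main obstacle is the vanishing identification used in (1) $\Rightarrow$ (2): Theorem \ref{th:mlww_pje_nil} delivers convergence but not the value of $\lim_N \frac{1}{N}\sum_n \mathrm{e}(n\theta)\prod_i f_i(T_i^n x)$ when $\mathrm{e}(\theta) \notin E(T_1,\dots,T_k)$. The natural route is to pass to the rotation extension $\widetilde T_i = T_i \times R_{\beta_i}$ on $X \times \mathbb{T}$ with $\sum_i \beta_i \equiv \theta \pmod 1$ and test functions $\widetilde f_i(x,y) = f_i(x)\,\mathrm{e}(y)$, so that pointwise joint ergodicity of the extension yields $\prod_i \int \widetilde f_i\,d\widetilde\mu_i = 0$ and forces the weighted average to vanish almost everywhere; establishing pointwise joint ergodicity of $\widetilde{\bm X}$ from that of $\bm X$ under the spectral condition $\mathrm{e}(\theta) \notin E(T_1,\dots,T_k)$ is the delicate step.
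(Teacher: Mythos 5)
Your proof of (2) $\Rightarrow$ (1) is correct and is essentially the paper's argument: contrapositive, eigenfunctions of modulus one, the identity $\prod_i g_i(T_i^{\lfloor\alpha n\rfloor}x)=\mathrm{e}\bigl(\lfloor\alpha n\rfloor m_0/\alpha\bigr)\prod_i g_i(x)$, and the nonvanishing of $\lim_N\frac1N\sum_{n=1}^N\mathrm{e}\bigl(\lfloor\alpha n\rfloor m_0/\alpha\bigr)$ (the paper isolates this as Lemma \ref{lem:unif_distr}, with the same case split into $\alpha\in\mathbb{Z}$, $\alpha\in\mathbb{Q}\setminus\mathbb{Z}$, $\alpha\notin\mathbb{Q}$).

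The direction (1) $\Rightarrow$ (2) has a genuine gap, which you yourself flag: everything reduces to showing that for $\lambda=\mathrm{e}(m/\alpha)\notin E(T_1,\dots,T_k)$ one has $\lim_N\frac1N\sum_{n=1}^N\lambda^n\prod_{i=1}^k f_i(T_i^nx)=0$ almost everywhere, and you do not prove this. Theorem \ref{th:mlww_pje_nil} only gives existence of the limit, not its value, and your proposed repair --- passing to the rotation extension $\widetilde T_i=T_i\times R_{\beta_i}$ and invoking pointwise joint ergodicity of the extended system --- is at best as hard as the original problem: pointwise joint ergodicity is a strong pointwise property that does not follow from the spectral hypothesis $\mathrm{e}(\theta)\notin E(T_1,\dots,T_k)$, and you would in particular need each $\widetilde T_i$ to be ergodic, which already imposes conditions on the individual $\beta_i$ that the hypothesis does not supply. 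The paper closes this gap with Lemma \ref{lem:conv:mostlambda}, proved by the van der Corput trick: setting $x_n=\lambda^n\prod_i f_i(T_i^nx)$, pointwise joint ergodicity applied to the functions $f_i\circ T_i^h\cdot\overline{f_i}$ shows that $\lim_N\frac1N\sum_{n=1}^N\langle x_{n+h},x_n\rangle_{\mathbb{C}}=\lambda^h\langle F\circ S^h,F\rangle_{L^2(\nu)}$ for a.e.\ $x$, where $S=T_1\times\cdots\times T_k$, $F=f_1\otimes\cdots\otimes f_k$ and $\nu=\otimes_i\mu_i$; the Ces\`aro averages over $h$ of these quantities tend to $0$ by the von Neumann mean ergodic theorem applied to the isometry $g\mapsto\lambda\,g\circ S$, whose fixed space is trivial precisely because $\lambda\notin E(S)=E(T_1,\dots,T_k)$. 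This converts the pointwise vanishing into an $L^2$ statement about the (already known to be ergodic) product system and requires no extension. A further minor point: rather than tracking which Fourier coefficients of $\mathbbm{1}_A$ survive, the paper first proves the vanishing statement when some $\int_Xf_i\,d\mu_i=0$ and then deduces the general case from the decomposition $f_i=f_i^{(0)}+\int_Xf_i\,d\mu_i$, which avoids having to identify the constant coefficient of the approximating trigonometric polynomial with the density $1/\alpha$.
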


Finally, we consider a pointwise jointly ergodic system  ${\bm X} = (X, \mathcal{B}, \mu_{1}, \dots, \mu_{k}, T_{1}, \dots, T_{k})$  when $T_{1}, \dots, T_{k}$ are commuting. 
In this case, as was mentioned in Remark \ref{rem:pje_notation}-(2), we have $\mu_{1} = \cdots = \mu_{k}$, and hence we let $\mu$ denote the common invariant measure. 

A measure-preserving transformation is called \emph{totally ergodic} if $T^{q}$ is ergodic for all $q \in \mathbb{N}$. 
Note that if $T$ is ergodic, then $T$ is totally ergodic if and only if $E (T) \cap \{ \mathrm{e} (a)\colon a \in \mathbb{Q} \} = \{1\}$. 
In a slight abuse of language, an eigenvalue $\lambda$ of the form $\lambda =\mathrm{e}(a)$ with $a\in \mathbb{Q}$ is called \emph{rational eigenvalue}. 
Hence for an ergodic transformation, it is totally ergodic if and only if it admits no rational eigenvalues other than $1$. 

In \cite{Berend1985}, Berend established that if commuting measure-preserving transformations $T_{1}, T_{2},\dots, T_{k}$, with $k\geq 2$, on a probability space $(X, \mathcal{B}, \mu)$ are $L^{2}$-jointly ergodic, then each $T_{i}$ is totally ergodic and, moreover, the transformations $T_{1}, T_{2},\dots, T_{k}$ are \emph{totally $L^{2}$-jointly ergodic}, meaning that $T_{1}^{q}, T_{2}^{q}, \dots, T_{k}^{q}$ are $L^{2}$-jointly ergodic for any $q\in \mathbb{N}$. 
In fact, it was shown that $T_{1} \times T_{2} \times \cdots \times T_{k}$ are totally ergodic with respect to $\mu^{\otimes k}$, hence we have that 
\[ 
E (T_{1}, T_{2}, \cdots T_{k} ) \bigcap \{ \mathrm{e}(a)\colon a \in \mathbb{Q} \} = \{1\}. 
\]
Thus, Theorem \ref{thm:equiv_multiple_bap} implies the following pointwise version of Berend's result.  

\begin{corollary} \label{cor:commutingPJE_TPJE}
Let $k \geq 2$. 
If $T_{1}, T_{2}, \dots, T_{k}$ are commuting and pointwise jointly ergodic on  $(X, \mathcal{B}, \mu)$, then they are totally pointwise jointly ergodic, that is, for any $q \in \mathbb{N}$, the joint measure-preserving system $(X,\mathcal{B},\mu_{1},\mu_{2},\dots ,\mu_{k},T_{1}^{q},T_{2}^{q}, \dots, T_{k}^{q})$ is pointwise jointly ergodic.  
\end{corollary}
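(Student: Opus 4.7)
The plan is to apply Theorem \ref{thm:equiv_multiple_bap} with $\alpha = q$. Since $T_{1}, \dots, T_{k}$ are commuting and pointwise jointly ergodic on the probability space $(X, \mathcal{B}, \mu)$, the dominated convergence theorem upgrades pointwise joint ergodicity to $L^{2}$-joint ergodicity. Berend's theorem, as recalled in the paragraph preceding the corollary, then yields
$$E(T_{1}, T_{2}, \dots, T_{k}) \cap \{\mathrm{e}(a) \colon a \in \mathbb{Q}\} = \{1\}.$$

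Now fix an arbitrary $q \in \mathbb{N}$. The set $\{\mathrm{e}(m/q) \colon m \in \mathbb{Z}\}$ of $q$-th roots of unity is contained in $\{\mathrm{e}(a) \colon a \in \mathbb{Q}\}$, so in particular
$$E(T_{1}, T_{2}, \dots, T_{k}) \cap \{\mathrm{e}(m/q) \colon m \in \mathbb{Z}\} = \{1\},$$
which is exactly condition (1) of Theorem \ref{thm:equiv_multiple_bap} for $\alpha = q$. Since $q$ is a positive integer, $\lfloor q n \rfloor = q n$, so the equivalent condition (2) of that theorem reads
$$\lim_{N \to \infty} \frac{1}{N} \sum_{n=1}^{N} f_{1}(T_{1}^{qn} x) \cdots f_{k}(T_{k}^{qn} x) = \prod_{i=1}^{k} \int_{X} f_{i} \, d\mu$$
for every choice of bounded measurable $f_{1}, \dots, f_{k}$ on $X$ and almost every $x \in X$. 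Since $(T_{i}^{q})^{n} = T_{i}^{qn}$ and each $T_{i}^{q}$ preserves $\mu$, this is precisely the assertion that $(X, \mathcal{B}, \mu, \dots, \mu, T_{1}^{q}, \dots, T_{k}^{q})$ is pointwise jointly ergodic.

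There is no genuine obstacle; the substantive work has already been packaged into Theorem \ref{thm:equiv_multiple_bap} together with Berend's theorem. The only points that need to be verified along the way are the two elementary reductions above: that pointwise joint ergodicity implies $L^{2}$-joint ergodicity, so that Berend's theorem is applicable, and that the $q$-th roots of unity lie inside the set of rational points on $\mathbb{T}$, which allows us to specialize Berend's no-nontrivial-rational-eigenvalue conclusion to condition (1) of Theorem \ref{thm:equiv_multiple_bap} at $\alpha = q$.
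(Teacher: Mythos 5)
Your proposal is correct and follows exactly the route the paper intends: Berend's theorem gives $E(T_{1},\dots,T_{k})\cap\{\mathrm{e}(a)\colon a\in\mathbb{Q}\}=\{1\}$, and Theorem \ref{thm:equiv_multiple_bap} applied with $\alpha=q$ (so $\lfloor qn\rfloor=qn$) yields total pointwise joint ergodicity. The two bridging observations you flag --- that boundedness plus dominated convergence upgrades pointwise to $L^{2}$-joint ergodicity so Berend's hypothesis is met, and that $q$-th roots of unity are rational points of $\mathbb{T}$ --- are exactly the (implicit) content of the paper's one-line deduction.
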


The structure of this paper is as follows. 
In Section \ref{sec:pre} we provide some background materials, which will be used later.  
In Section \ref{sec3:pf1} we prove Theorem \ref{th:mlww_pje_nil}. 
Finally, Section  \ref{sec4:pf2} is devoted to the proofs of Theorems \ref{thm:multiple_bap} and \ref{thm:equiv_multiple_bap}.

\section{Preliminaries}
\label{sec:pre}

In this section, we introduce some basic facts and useful results, which we will use throughout the paper.  

\subsection{Van der Corput lemma}

One of the main tools in this paper is the following van der Corput trick; see, for example, \cite{Kuipers-Niederreiter1974}*{LEMMA 3.1, Chapter 1}. 

\begin{lemma}\label{lem:vdC}
Let $x_{1}, \dots, x_{N}$ be complex numbers, and let $H$ be an integer with $1 \leq H \leq N$. 
Then
\begin{equation}
\label{vdC-inequality}
H^{2} \left\vert \sum_{n=1}^{N} x_{n} \right\vert^{2} \leq
H (N+H-1) \sum_{n=1}^{N} |x_{n}|^{2} + 2 (N + H - 1) \sum_{h=1}^{H-1} (H-h) \mathrm{Re} \sum_{n=1}^{N-h} \langle x_{n+h}, x_{n} \rangle _{\mathbb{C}}.
\end{equation}
\end{lemma}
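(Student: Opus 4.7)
The plan is to follow the classical proof of the van der Corput inequality, which rests on a clever ``smoothing'' of the partial sum followed by Cauchy--Schwarz. First I would extend the sequence by setting $x_n = 0$ for $n \leq 0$ and $n > N$, so that all sums may be written over $\mathbb{Z}$ without boundary issues. The key idea is then to write each $x_n$ as the average of itself over an $H$-window: since every $x_n$ with $1 \le n \le N$ appears exactly $H$ times in $\sum_{m} \sum_{h=0}^{H-1} x_{m+h}$, one has the identity
\[
H \sum_{n=1}^{N} x_n \;=\; \sum_{m \in \mathbb{Z}} y_m, \qquad \text{where } y_m := \sum_{h=0}^{H-1} x_{m+h}.
\]
Because $y_m$ can be nonzero only when the window $[m, m+H-1]$ meets $[1,N]$, the outer sum on the right is supported on an interval of length exactly $N+H-1$.

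Next I would apply the Cauchy--Schwarz inequality on this interval, obtaining
\[
\Bigl|\sum_m y_m\Bigr|^2 \;\le\; (N+H-1) \sum_{m} |y_m|^2,
\]
so that $H^2\bigl|\sum_{n=1}^N x_n\bigr|^2 \le (N+H-1)\sum_m |y_m|^2$. The remaining task is to expand $|y_m|^2 = \sum_{h,h'=0}^{H-1} \langle x_{m+h}, x_{m+h'}\rangle_{\mathbb{C}}$ and swap the order of summation. Grouping terms by the difference $k = h - h'$ (with $|k| \le H-1$), I would note that for each fixed $k$ there are exactly $H - |k|$ pairs $(h,h')$ producing it, and that $\sum_m \langle x_{m+h}, x_{m+h'}\rangle_{\mathbb{C}} = \sum_{n} \langle x_{n+k}, x_n\rangle_{\mathbb{C}}$ after a reindexing that depends only on $k$.

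Using the conjugate symmetry $\langle x_{n-k}, x_n\rangle_{\mathbb{C}} = \overline{\langle x_n, x_{n-k}\rangle_{\mathbb{C}}}$ to fold the negative values of $k$ onto the positive ones produces the factor $2\,\mathrm{Re}$ in the off-diagonal terms, while the diagonal $k=0$ gives $H\sum_{n=1}^{N} |x_n|^2$. Substituting back and truncating the effective range of $n$ in the inner sum to $1 \le n \le N-h$ (using that $x_n$ vanishes outside $[1,N]$) yields exactly the inequality \eqref{vdC-inequality}. The only point requiring a little care is the bookkeeping of the index interval for $m$ and the reindexing in the inner sums; there is no genuine obstacle, as the argument is purely combinatorial once Cauchy--Schwarz has been applied.
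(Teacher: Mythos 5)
Your argument is correct: the smoothing identity $H\sum_{n=1}^N x_n=\sum_m y_m$ with $y_m=\sum_{h=0}^{H-1}x_{m+h}$ supported on $N+H-1$ integers, followed by Cauchy--Schwarz and the expansion of $\sum_m|y_m|^2$ by the difference $k=h-h'$, yields precisely \eqref{vdC-inequality}. The paper does not prove this lemma but cites Kuipers--Niederreiter, and your proof is essentially the same standard argument given there, so there is nothing further to compare.
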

This lemma implies that if $(x_{n})_{n \in \mathbb{N}}$ is a bounded sequence in $\mathbb{C}$, then 
\[
\limsup_{N \rightarrow \infty} \left\vert \frac{1}{N} \sum_{n=1}^{N} x_{n} \right\vert^{2} \leq  \limsup_{H \rightarrow \infty} \frac{1}{H} \sum_{h=1}^{H} \limsup_{N \rightarrow \infty}  \frac{1}{N} \sum_{n=1}^{N} \mathrm{Re} \langle x_{n+h}, x_{n} \rangle _{\mathbb{C}}.
\]
In particular, if $ \gamma_{h} := \lim\limits_{N \rightarrow \infty} \frac{1}{N} \sum\limits_{n=1}^{N}  \langle x_{n+h}, x_{n} \rangle _{\mathbb{C}}$ exists for every $h\in \mathbb{N}$ and $\lim\limits_{H \rightarrow \infty} \frac{1}{H} \sum\limits_{h=1}^{H}  \gamma_{h} = 0$, then 
\[
\lim_{N \rightarrow \infty} \frac{1}{N} \sum\limits_{n=1}^{N} x_{n} =0.
\]

\subsection{Nilsystems and nilsequences} \label{subsec:nilsequence}

Let $m \in \mathbb{N}$. 
Assume that $G$ is an $m$-step nilpotent Lie group and $\Gamma <G$ is a discrete cocompact subgroup of $G$. 
The compact manifold $M=G/\Gamma$ is called an $m$-step nilmanifold. 
The group $G$ acts on $M$ by left translations $G\times M\ni (g,x)\mapsto gx\in M$. 
The Haar measure of $M$ is the unique probability measure invariant under this action, and is denoted by $\mu$.  
Letting $R$ denote left multiplication by the fixed element $\tau \in G$, we call $(M,\mathcal{B},\mu,R)$ an \emph{$m$-step nilsystem}, where $\mathcal{B}$ is the Borel $\sigma$-algebra of $M$. 


Let $M=G/\Gamma$ be an $m$-step nilmanifold, $\varphi \colon M\to \mathbb{C}$ be a continuous function, $\tau \in G$, and $x\in M$. 
The sequence $(\varphi (\tau^{n}x))_{n\in \mathbb{N}}$ is called a \emph{basic $m$-step nilsequence}. 
Note also that the space of $m$-step nilsequences is an algebra under pointwise addition and multiplication. 
It is known that, given a polynomial $P$ of degree $d$, the sequence $(\mathrm{e}(P(n)))_{n\in \mathbb{N}}$ is a $d$-step nilsequence. 

\subsection{Host-Kra structure theory} \label{subsec:HKtheory}

We review the Host-Kra seminorms and an associated decomposition result of functions that we will use to prove Theorem \ref{th:mlww_pje_nil}. 
See \cites{Host-Kra2005, Host-Kra2009, Host-Kra2018} in detail for following discussion.

\subsubsection*{Notation and conventions}

For $z\in \mathbb{C}$, let $\mathrm{C}(z)=\bar{z}$ denote complex conjugation. 
Thus $\mathrm{C}^{m}z=z$ if $m$ is an even integer and $\mathrm{C}^{m}z=\bar{z}$ if $m$ is an odd integer. 
For $\epsilon =(\epsilon_{1}, \dots, \epsilon_{m})\in \{0, 1\}^{m}$, we write $\vert \epsilon \vert =\epsilon_{1}+\dots +\epsilon_{m}$. 
For $\epsilon =(\epsilon_{1}, \dots, \epsilon_{m})\in \{0, 1\}^{m}$ and $h=(h_{1}, \dots, h_{m})\in \mathbb{Z}^{m}$, we define $\epsilon \cdot h=\epsilon_{1}h_{1}+\dots +\epsilon_{m}h_{m}$.

For a measure-preserving system $(X,\mathcal{B} ,\mu , T)$, a \emph{factor} is a system $(Z,\mathcal{Z} ,\nu, S)$ along with a measurable map $\pi \colon X\to Z$ such that $\pi \circ T=S\circ \pi$ and $\pi_{*}\mu =\nu$.  
As usual, we use the same letter $T$ to denote the transformation in the factor system instead of $S$. 
In a slight abuse of language, we say $Z$ is a factor of $X$. 
Also, we use the same letter $\mathcal{Z}$ for both the $\sigma$-algebra of $Z$ and its inverse image $\pi^{-1}\mathcal{Z}\subset \mathcal{B}$. 
Hence, by an abuse of terminology, a $T$-invariant sub $\sigma$-algebra of $\mathcal{B}$ is also called factor.
We call a factor $Z$ of $X$ that admits the structure of nilsystem as a nilfactor.

Let $\mathcal{Z}(\subset \mathcal{B})$ be a factor. 
For $f\in L^{1}(X,\mu)$, let $\mathrm{E} (\,f\,|\,\mathcal{Z}\,)$ denote the conditional expectation of $f$ with respect to $\mathcal{Z}$. 

\subsubsection{Host-Kra seminorms} 

Let $(X,\mathcal{B} ,\mu , T)$ be an ergodic system and $f\in L^{\infty}(X,\mu)$.
Then for every $m\in \mathbb{N}$, define inductively
\begin{equation}\label{def:GHKseminorm}
\begin{split}
\VERT f \VERT_{1}^{2} 
&= \left| \int_X f \, d \mu \right|^2   \\
\VERT f \VERT_{m+1}^{2^{m+1}} &= \lim_{H \rightarrow \infty} \frac{1}{H} \sum_{h=0}^{H-1} \VERT T^{h} f \cdot \overline{f} \VERT_{m}^{2^m}. 
\end{split}
\end{equation}
It is shown in \cite{Host-Kra2005} that for every $m\in \mathbb{N}$ the limit above exists and $\VERT \cdot \VERT_{m}$ defines a seminorm on $L^{\infty}(X,\mu)$. 
We also have an explicit expression that 
\begin{equation}\label{eqn:HKseminorm2}
\VERT f \VERT_{m}^{2^{m}} 
=\lim_{H \rightarrow \infty} \frac{1}{H^{m}} \sum_{h_{1}, \dots, h_{m} =0}^{H-1} \int_{X} \prod_{\epsilon \in \{0, 1\}^{m} } \mathrm{C}^{\vert \epsilon \vert}  T^{\epsilon \cdot h} f \, d \mu.
\end{equation}
We sometimes write $\VERT \cdot \VERT_{m,T}$ instead of $\VERT \cdot \VERT_{m}$ when we need to specify the transformation for which the seminorms are considered. 

An ergodic system $(X,\mathcal{B}, \mu , T)$ is called weakly mixing if the product system $(X\times X,\mathcal{B} \times \mathcal{B} ,\mu \otimes \mu , T\times T)$ is ergodic. 
If the system $(X,\mathcal{B} ,\mu , T)$ is weakly mixing, then for every $m\in \mathbb{N}$ and $f\in L^{\infty}(X,\mu)$, we have $\VERT f \VERT_{m}=\vert \int_{X} f \, d \mu \vert$. 

We conclude this subsection by providing the following lemma.
\begin{lemma}\label{lem:product}
Let $(X, \mathcal{B}, \mu, T)$ and $(Y, \mathcal{D}, \nu, S)$ are measure-preserving systems. 
Suppose that $T \times S$ is ergodic with respect to $\mu \otimes \nu$. 
Then for every $m\in \mathbb{N}$, $f\in L^{\infty}(X,\mu)$ and $g\in L^{\infty}(Y,\nu)$, we have
\[ 
\VERT f \otimes g \VERT_{m, T \times S} \leq \VERT f \VERT_{m, T} \| g \|_{L^{\infty}(Y,\nu)}. 
\]
\end{lemma}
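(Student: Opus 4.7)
The plan is to proceed by induction on $m$, using the inductive definition \eqref{def:GHKseminorm} of the Host-Kra seminorms. First, the hypothesis that $T \times S$ is ergodic on $(X\times Y, \mu\otimes\nu)$ guarantees that both $T$ and $S$ are ergodic on their respective spaces, so the three seminorms $\VERT\cdot\VERT_{m,T}$, $\VERT\cdot\VERT_{m,S}$ and $\VERT\cdot\VERT_{m,T\times S}$ are all well-defined for every $m\in\mathbb{N}$.

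For the base case $m=1$, the inequality follows directly from Fubini: $\VERT f\otimes g\VERT_{1,T\times S}=\bigl|\int f\,d\mu\bigr|\cdot\bigl|\int g\,d\nu\bigr|\leq \VERT f\VERT_{1,T}\|g\|_{L^\infty(\nu)}$. For the inductive step, write $F=f\otimes g$ and observe the key algebraic identity
\[
(T\times S)^{h}F\cdot\overline{F}=\bigl(T^{h}f\cdot\overline{f}\bigr)\otimes\bigl(S^{h}g\cdot\overline{g}\bigr),
\]
so $(T\times S)^{h}F\cdot\overline{F}$ is again a product function on $X\times Y$. Applying the induction hypothesis to this product on the ergodic system $(X\times Y,\mu\otimes\nu,T\times S)$ yields
\[
\VERT (T\times S)^{h}F\cdot\overline{F}\VERT_{m,T\times S}^{2^{m}}\leq \VERT T^{h}f\cdot\overline{f}\VERT_{m,T}^{2^{m}}\,\|S^{h}g\cdot\overline{g}\|_{L^{\infty}(\nu)}^{2^{m}}\leq \VERT T^{h}f\cdot\overline{f}\VERT_{m,T}^{2^{m}}\,\|g\|_{L^{\infty}(\nu)}^{2^{m+1}},
\]
using the trivial bound $\|S^{h}g\cdot\overline{g}\|_\infty\leq\|g\|_\infty^{2}$ together with $S$-invariance of $\|\cdot\|_{L^\infty(\nu)}$. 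Averaging over $h\in\{0,1,\dots,H-1\}$, letting $H\to\infty$, and using the defining relation \eqref{def:GHKseminorm} on both sides gives $\VERT F\VERT_{m+1,T\times S}^{2^{m+1}}\leq \VERT f\VERT_{m+1,T}^{2^{m+1}}\,\|g\|_{L^\infty(\nu)}^{2^{m+1}}$; taking $2^{m+1}$-th roots closes the induction.

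There is no real obstacle here: the proof is a bookkeeping exercise once one notices that the tensor-product structure is preserved under the operation $F\mapsto T^{h}F\cdot\overline{F}$ appearing in the recursive formula. The only thing to be careful about is that the induction hypothesis must be applied with the ambient system $(X\times Y,\mu\otimes\nu,T\times S)$ held fixed throughout, which is automatic since this system is the one assumed to be ergodic.
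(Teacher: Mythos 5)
Your proof is correct and follows essentially the same route as the paper's: induction on $m$, the base case via ergodicity of $T\times S$ and Fubini, and the inductive step via the identity $(T\times S)^{h}(f\otimes g)\cdot\overline{f\otimes g}=(T^{h}f\cdot\overline{f})\otimes(S^{h}g\cdot\overline{g})$ together with the bound $\|S^{h}g\cdot\overline{g}\|_{L^{\infty}(\nu)}\leq\|g\|_{L^{\infty}(\nu)}^{2}$. No substantive differences.
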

\begin{proof}
The proof proceeds by induction $m\in \mathbb{N}$. 
Since $T \times S$ is ergodic with respect to $\mu \otimes \nu$, one has 
\begin{equation*}
\VERT f \otimes g \VERT_{1, T \times S}=\left\vert \int_{X\times Y} f\otimes g \, d \mu \otimes \nu \right\vert =\left\vert \int_{X} f\, d \mu \right\vert \cdot \left\vert \int_{Y} g \, d\nu \right\vert \leq \VERT f \VERT_{1, T} \cdot \left\| g\right\|_{L^{\infty}(Y,\nu)}. 
\end{equation*}

Let $m\geq 2$ and assume that the result holds for $m-1$. 
Then one has 
\[
\VERT (T\times S)^{h}(f \otimes g)\cdot \overline{f \otimes g}\VERT_{m-1,T\times S}\leq \VERT T^{h}f \cdot \overline{f}\VERT_{m-1,T} \cdot \| S^{h}g\cdot \overline{g}\|_{L^{\infty}(Y,\nu)} \leq  \VERT T^{h}f \cdot \overline{f}\VERT_{m-1,T} \cdot  \| g\|_{L^{\infty}(Y,\nu)}^{2}, 
\]
and hence 
\begin{align*}
\frac{1}{H} \sum_{h=0}^{H-1} \VERT (T\times S)^{h} f \otimes g \cdot \overline{f \otimes g} \VERT_{m-1}^{2^{m-1}}\leq \frac{1}{H} \sum_{h=0}^{H-1} \VERT T^{h}f \cdot \overline{f}\VERT_{m-1,T}^{2^{m-1}} \cdot \| g\|_{L^{\infty}(Y,\nu)}^{2^{m}}. 
\end{align*}
It follows that 
\begin{align*}
\VERT f \otimes g \VERT_{m, T \times S}^{2^{m}}
&=\lim_{H \to \infty} \frac{1}{H} \sum_{h=0}^{H-1} \VERT (T\times S)^{h} f \otimes g \cdot \overline{f \otimes g} \VERT_{m-1}^{2^{m-1}} \\ 
&\leq \left[ \lim_{H \to \infty} \frac{1}{H} \sum_{h=0}^{H-1} \VERT T^{h}f \cdot \overline{f}\VERT_{m-1,T}^{2^{m-1}} \right] \| g\|_{L^{\infty}(Y,\nu)}^{2^{m}} =\VERT f \VERT_{m, T}^{2^{m}}\cdot \| g\|_{L^{\infty}(Y,\nu)}^{2^{m}} 
\end{align*}
The proof of Lemma \ref{lem:product} is obtained. 
\end{proof}

\subsubsection{The ergodic structure theorem}

Let $m\in \mathbb{N}$. 
For an invertible ergodic system $(X, \mathcal{B}, \mu, T)$, there exists a factor $\mathcal{Z}_{m} \subset \mathcal{B}$ of order $m$ of $X$ such that 
 \begin{enumerate}[(i)]
 \item for $f \in L^{\infty}$, one has $\VERT f \VERT_{m+1} =0$ if and only if $\mathrm{E} (\,f\,|\,\mathcal{Z}_{m}\,) = 0$, so there exists the orthogonal decomposition
\begin{equation}
\label{HK-decomposition} L^2(X) = L^{2} (Z_{m}) \oplus \overline{ \{f \in L^{\infty} (X)\colon \VERT f \VERT_{m+1} =0  \}}
\end{equation}
 \item the system $Z_{m} = Z_{m}(X) = (X, \mathcal{Z}_{m}, \mu, T)$ is an inverse limit of $m$-step ergodic nilsystems. 
\end{enumerate}

This leads to the following useful structure theorem. 
See \cite{Host-Kra2018}*{Theorem 5, Section 16.1}. 
\begin{theorem}
\label{thm:HK-structure}
Let $(X, \mathcal{B}, \mu, T)$ be an ergodic system. 
Let $m \in \mathbb{N}$, $\epsilon > 0$ and $f \in L^{1}(X,\mu)$. 
Then there exist an $m$-step nilsystem $(Z, \mathcal{Z}, \nu, T)$ and a factor map $\pi \colon X \rightarrow Z$ such that 
\[ 
f = f_{\rm nil} \circ \pi + f_{\rm unif} + f_{\rm sml},
\]
where 
\begin{enumerate}[{\rm (i)}]
\item the function $f_{\rm nil}$ is a continuous function on $Z$, and if we further assume $f\in L^{\infty}(X,\mu)$, then $\| f_{\rm nil} \|_{\infty}\leq \| f_{\rm sml} \|_{L^{\infty}(X,\mu)}$;
\item  the function $f_{\rm unif} \in L^{\infty}(X,\mu)$ satisfies $\VERT f_{\rm unif} \VERT_{m+1} = 0$;
\item  the function $f_{\rm sml} \in L^{1}(X,\mu)$ satisfies $\| f_{\rm sml} \|_{L^{1}(X,\mu)} < \epsilon$.
\end{enumerate}
\end{theorem}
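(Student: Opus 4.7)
The plan is to derive the stated decomposition from the two Host--Kra facts recalled immediately before the theorem: the orthogonal splitting \eqref{HK-decomposition} characterising $\VERT \cdot \VERT_{m+1} = 0$ as the orthocomplement of $L^2(Z_m)$, together with the fact that $Z_m$ is an inverse limit of $m$-step ergodic nilsystems. Each of the three required pieces corresponds to peeling off one layer: projecting onto $\mathcal{Z}_m$ produces $f_{\rm unif}$; projecting further onto a single finite stage of the inverse limit lands on a genuine $m$-step nilmanifold; and smoothing there produces $f_{\rm nil}$. Everything left over is lumped into $f_{\rm sml}$, and the only task is to keep each approximation error below $\epsilon/3$.

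First I would reduce to the bounded case: choose $M$ so large that $r:=f-f\mathbbm{1}_{\{|f|\leq M\}}$ has $\|r\|_{L^1}<\epsilon/3$, and set $g=f\mathbbm{1}_{\{|f|\leq M\}}\in L^\infty$. (When $f$ is already in $L^\infty$ this step is vacuous and $M=\|f\|_\infty$.) Applying the orthogonal decomposition \eqref{HK-decomposition} to $g$ gives $g = \mathrm{E}(g\mid \mathcal{Z}_m) + f_{\rm unif}$, with $f_{\rm unif}:=g-\mathrm{E}(g\mid \mathcal{Z}_m)\in L^\infty(X,\mu)$. Since $\mathrm{E}(f_{\rm unif}\mid \mathcal{Z}_m)=0$, property~(i) of $\mathcal{Z}_m$ yields $\VERT f_{\rm unif}\VERT_{m+1}=0$, establishing item~(ii).

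Next I handle $\tilde g := \mathrm{E}(g\mid \mathcal Z_m)\in L^\infty(Z_m)$. Writing $\mathcal Z_m = \bigvee_{j\geq 1}\mathcal Z^{(j)}$ with each factor $Z^{(j)}=(X,\mathcal Z^{(j)},\mu,T)$ isomorphic to an $m$-step ergodic nilsystem, the martingale convergence theorem gives $\mathrm{E}(\tilde g\mid \mathcal Z^{(j)})\to \tilde g$ in $L^1$; pick $j$ large enough that $\|\tilde g - h\circ \pi\|_{L^1}<\epsilon/3$ where $\pi\colon X\to Z:=Z^{(j)}$ is the factor map and $h\in L^\infty(\nu)$ is the corresponding conditional expectation on the nilmanifold $Z$. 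Since $Z$ is a compact nilmanifold, continuous functions are dense in $L^1(\nu)$, so one can choose continuous $f_{\rm nil}\colon Z\to\mathbb C$ with $\|h-f_{\rm nil}\|_{L^1(\nu)}<\epsilon/3$; if $g\in L^\infty$ then truncation (say, convolving with an approximate identity on $Z$ after capping $h$ at level $\|g\|_\infty$) can be arranged so that $\|f_{\rm nil}\|_\infty$ respects the bound claimed in item~(i). Setting
\[
f_{\rm sml}\ :=\ r\ +\ (\tilde g - h\circ\pi)\ +\ (h-f_{\rm nil})\circ\pi
\]
collects the three errors and satisfies $\|f_{\rm sml}\|_{L^1}<\epsilon$, and by construction $f = f_{\rm nil}\circ\pi + f_{\rm unif} + f_{\rm sml}$.

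The only genuinely nontrivial ingredients are the two external facts (i)--(ii) preceding the theorem; once those are accepted, no obstacle remains beyond organising three independent $\epsilon/3$-approximations. The delicate point worth flagging is that in general $Z_m$ itself need not be a nilsystem (only an inverse limit of such), so it is essential to pass to a single finite stage $Z^{(j)}$ before invoking the continuity/density step, since the continuous-function approximation is carried out on the compact nilmanifold $Z$ rather than on $X$.
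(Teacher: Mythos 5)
The paper does not actually prove Theorem \ref{thm:HK-structure}: it is imported from Host--Kra \cite{Host-Kra2018}*{Theorem 5, Section 16.1}, and the only argument the authors supply is the remark that follows, which reduces the non-invertible case to the invertible one via the natural extension. Your reconstruction from the two facts recorded just before the theorem --- the orthogonal splitting \eqref{HK-decomposition} and the inverse-limit description of $Z_{m}$ --- is correct in outline and is essentially the standard proof: truncate $f$ to a bounded $g$; set $f_{\rm unif}=g-\mathrm{E}(g\mid\mathcal{Z}_{m})$, which lies in $L^{\infty}$ because conditional expectation preserves boundedness, and satisfies $\VERT f_{\rm unif}\VERT_{m+1}=0$ by item (i) of the structure theorem; descend to a single finite stage $Z^{(j)}$ of the inverse limit by martingale convergence; regularize there by density of continuous functions; and note that the error $(h-f_{\rm nil})\circ\pi$ has the same $L^{1}(\mu)$ norm as $h-f_{\rm nil}$ in $L^{1}(\nu)$ because $\pi_{*}\mu=\nu$. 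Two points are worth flagging. First, the bound in item (i) as printed, $\|f_{\rm nil}\|_{\infty}\leq\|f_{\rm sml}\|_{L^{\infty}(X,\mu)}$, is evidently a typo for $\|f_{\rm nil}\|_{\infty}\leq\|f\|_{L^{\infty}(X,\mu)}$ (as in Host--Kra); your argument, in which conditional expectations and convolution against a probability density on $G$ never increase the sup norm, establishes the intended bound. Second, the preliminary facts (i)--(ii) are stated in the paper only for \emph{invertible} ergodic systems, while the theorem is asserted for general ergodic systems; your proof therefore covers the invertible case, and the non-invertible case still requires the natural-extension argument given in the paper's remark following the theorem.
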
 

\begin{remark}
Although the above structure theorem in \cite{Host-Kra2018} assumes that the system $ (X, \mathcal{B}, \mu, T)$ is invertible, it still holds for non-invertible systems.  
To see this, let us consider the natural extension $(\widetilde{X}, \widetilde{\mathcal{B}}, \widetilde{\mu}, \widetilde{T})$ of the system $(X, \mathcal{B}, \mu, T)$. 
By \eqref{HK-decomposition}, one has
\begin{equation*}
L^{2}(\widetilde{X}) = L^{2} (Z_{m}(\widetilde{X})) \oplus \overline{ \left\{f \in L^{\infty} (\widetilde{X})\colon \VERT f \VERT_{m+1, \widetilde{T}} =0  \right\}},
\end{equation*}
and hence
\begin{equation}
\label{HK-decomposition-noninvertible}
L^{2}(X) = \left[ L^{2} (Z_{m}(\widetilde{X})) \cap L^{2}(X)\right] \oplus \left[ \overline{ \left\{f \in L^{\infty} (\widetilde{X})\colon \VERT f \VERT_{m+1, \widetilde{T}} =0  \right\}} \cap L^{2}(X)\right].
\end{equation}
Note that \eqref{def:GHKseminorm} is also defined for $(\widetilde{X}, \widetilde{\mathcal{B}}, \widetilde{\mu}, \widetilde{T})$ and it is not hard to check that 
\begin{equation*}
\overline{ \left\{f \in L^{\infty} (\widetilde{X})\colon \VERT f \VERT_{m+1, \tilde{T}} =0  \right\}} \cap L^{2}(X) =  \overline{ \left\{ f \in L^{\infty} (X)\colon \VERT f \VERT_{m+1, T} =0  \right\}}.
\end{equation*}
Recall that any factor of a nilfactor is also a nilfactor. 
Therefore, Theorem \ref{thm:HK-structure} for noninvertible systems follows from \eqref{HK-decomposition-noninvertible}.
\end{remark}

\subsubsection{Local seminorms on $\ell^{\infty}$} 

In this subsection, we review the local seminorms introduced by Host and Kra in \cite{Host-Kra2009}. 
Let $m \in \mathbb{N}$. 
Let $a=(a_{n})_{n\in \mathbb{N}}$ be a bounded sequence in $\mathbb{C}$, and ${\bm I}=(I_{N})_{N\in \mathbb{N}}$ be a sequence of intervals in $\mathbb{Z}$ whose lengths tend to infinity as $N\to \infty$. 
We say that $(a_{n})_{n\in \mathbb{N}}$ satisfies property $\mathcal{P}(m)$ on ${\bm I}$ if for any $h=(h_{1}, \dots, h_{m}) \in ({\mathbb{N} \cup \{0\}})^{m}$, the following limit 
\[ 
\lim_{N\rightarrow \infty} \frac{1}{\vert I_{N}\vert} \sum_{n\in I_{N}} \prod_{\epsilon \in \{0,1\}^{m}} \mathrm{C}^{\vert \epsilon \vert} a_{n + h \cdot \epsilon}
\]
exists, where $\vert I_{N}\vert$ denotes the length of the interval $I_{N}$. 
Let $c_{h} (a,{\bm I})$ denote this limit. 
It is shown in \cite{Host-Kra2009}*{Proposition 2.2} that the following limit exists and is non-negative: 
\[
\lim_{H \rightarrow \infty} \frac{1}{H^{m}} \sum_{h_{1}, \dots, h_{m} = 0}^{H-1} c_{h} (a,{\bm I}). 
\] 
Then one defines 
\begin{equation}\label{def:loc_seminorm}
\| a \|_{m,{\bm I}} = \left[ \lim_{H \rightarrow \infty} \frac{1}{H^{m}} \sum_{h_{1}, \dots, h_{m} = 0}^{H-1} c_{h} (a,{\bm I}) \right]^{1/2^{m}},
\end{equation}
which is called a local seminorm in \cite{Host-Kra2009}*{Definition 2.3}. 
For the case $I_{N}=\{ 1,\dots ,N\}$, the property is simply referred to as property $\mathcal{P}(m)$ with omitting `on ${\bm I}$' from the nomenclature, and accordingly we simply write $c_{h} (a)$ and $\| a \|_{m}$ instead of $c_{h} (a,{\bm I})$ and $\| a \|_{m,{\bm I}}$, respectively. 

The following result is obtained in \cite{Host-Kra2009}*{Corollary 2.14}. 
We write the uniform norm of a sequence $a= (a_{n})_{n}$ as $\| a \|_{\infty}$. 
\begin{lemma}
\label{lemma:HostKra:2.14}
Let $b = (b_{n})_{n}$ be an  $(m-1)$-step nilsequence and $\delta > 0$. There exists a constant $c = c(b, \delta)$ such that for every bounded sequence $a = (a_{n})_{n}$ satisfying property $\mathcal{P}(m)$, we have 
\[ \limsup_{N \rightarrow \infty} \left| \frac{1}{N} \sum_{n=1}^{N} a_{n} b_{n} \right| \leq c \| a \|_{m} + \delta \| a \|_{\infty}. \]
\end{lemma}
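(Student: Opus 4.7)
The plan is to combine a uniform approximation argument with an iterated van der Corput cascade, following the original approach of Host and Kra \cite{Host-Kra2009}. The idea is to first replace the general $(m-1)$-step nilsequence $b$ by a basic one at the cost of the error term $\delta\|a\|_\infty$, and then to apply Lemma~\ref{lem:vdC} repeatedly $m-1$ times in order to convert the correlation of $a$ with a basic $(m-1)$-step nilsequence into an expression directly matching the definition \eqref{def:loc_seminorm} of the $m$-th local seminorm $\|a\|_m$.

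Concretely, since an $(m-1)$-step nilsequence is by definition a uniform limit of basic ones, first choose a basic $(m-1)$-step nilsequence $\widetilde{b}_n = \varphi(\tau^n y)$ on a nilmanifold $M = G/\Gamma$, with $\varphi \in C(M)$ and $y \in M$, such that $\|b - \widetilde{b}\|_\infty \le \delta$. The triangle inequality then reduces the lemma to producing a constant $c(\widetilde{b})$ with $\limsup_N |N^{-1}\sum_{n=1}^N a_n\widetilde{b}_n| \le c(\widetilde{b})\|a\|_m$. To obtain this, apply Lemma~\ref{lem:vdC} to the sequence $u_n := a_n\widetilde{b}_n$ a total of $m-1$ times; each iteration introduces a new shift parameter $h_j$ and replaces the sequence being averaged by $\overline{u_n}u_{n+h_j}$. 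After the cascade the $\widetilde{b}$-factors assemble into the cube product $\prod_{\epsilon\in\{0,1\}^{m-1}} \mathrm{C}^{|\epsilon|}\widetilde{b}_{n+\epsilon\cdot h}$, which by continuity of $\varphi$ equals $\Psi_h(\tau^n y)$ for some continuous $\Psi_h\colon M\to\mathbb{C}$ uniformly bounded in $h$, while the $a$-factors form the cube product $\prod_\epsilon \mathrm{C}^{|\epsilon|} a_{n+\epsilon\cdot h}$, whose joint behaviour in $n$ and $h$ is governed by property $\mathcal{P}(m)$ via \eqref{def:loc_seminorm}.

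The outcome is an estimate of the form
\[
\limsup_N \Bigl|\frac{1}{N}\sum_{n=1}^N a_n\widetilde{b}_n\Bigr|^{2^{m-1}} \le C_m \limsup_{H\to\infty} \frac{1}{H^{m-1}}\sum_{h\in[0,H)^{m-1}} \limsup_N \frac{1}{N}\sum_n \Bigl(\prod_{\epsilon\in\{0,1\}^{m-1}} \mathrm{C}^{|\epsilon|} a_{n+\epsilon\cdot h}\Bigr)\Psi_h(\tau^n y).
\]
The main obstacle I anticipate is the final decoupling of the $a$-cube from $\Psi_h(\tau^n y)$: since both depend on $n$, a naive Cauchy--Schwarz would destroy the cube structure of the $a$-average that defines $\|a\|_m$. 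The cleanest route, which I would follow, is to exploit unique ergodicity of the nilsystem $(M,\tau)$ together with property $\mathcal{P}(m)$ to show that the inner $n$-sum factorises in the limit, with the $\Psi_h$-contribution converging to the Haar integral of $\Psi_h$ over $M$ (a constant depending only on $\widetilde{b}$) and the $a$-contribution converging to $c_h(a)$. Averaging over $h$ and invoking \eqref{def:loc_seminorm} then produces $\|a\|_m^{2^{m-1}}$ on the right-hand side, so the desired bound holds with $c(b,\delta)$ an explicit constant depending only on $\widetilde{b}$ and $m$.
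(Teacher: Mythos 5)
First, a point of reference: the paper does not prove this lemma at all --- it is quoted verbatim from Host--Kra \cite{Host-Kra2009}*{Corollary 2.14} and used as a black box --- so there is no in-paper argument to compare yours against, and what is being assessed is whether your sketch reconstructs a valid proof. Your opening reduction (approximating $b$ uniformly by a basic $(m-1)$-step nilsequence $\widetilde b$ at the cost of $\delta \| a\|_{\infty}$) is correct and is indeed where the $\delta$-term comes from, and the iterated van der Corput cascade is a reasonable opening. The argument breaks at exactly the step you flag as the ``main obstacle'': the decoupling of the $a$-cube from $\Psi_{h}(\tau^{n}y)$, and your proposed resolution via unique ergodicity is not a technical gap but a false claim.

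Unique ergodicity of $(M,\tau)$ controls $\frac{1}{N}\sum_{n}\Psi_{h}(\tau^{n}y)$ in isolation; it says nothing about the correlation of $\Psi_{h}(\tau^{n}y)$ with the bounded sequence $A_{n}(h)=\prod_{\epsilon}\mathrm{C}^{\vert\epsilon\vert}a_{n+\epsilon\cdot h}$, and asserting that this mixed average factorises into the product of the two separate limits is essentially assuming the conclusion of the lemma one differencing level down. Concretely, take $m=2$, $\theta$ irrational, and $a_{n}=\widetilde b_{n}=\cos (2\pi n\theta)$ (a basic $1$-step nilsequence, which satisfies $\mathcal{P}(2)$ by unique ergodicity of the relevant product systems). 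After one van der Corput step the inner average is $\lim_{N}\frac{1}{N}\sum_{n}\cos^{2}(2\pi n\theta)\cos^{2}(2\pi (n+h)\theta)=\frac{1}{4}+\frac{1}{8}\cos (4\pi h\theta)$, whereas the claimed factorised value is $\bigl(\frac{1}{2}\cos (2\pi h\theta)\bigr)^{2}=\frac{1}{8}+\frac{1}{8}\cos (4\pi h\theta)$; these differ by $\frac{1}{8}$ for every $h$. There is also a bookkeeping mismatch: $m-1$ applications of van der Corput produce $(m-1)$-dimensional cubes of $a$, whereas $c_{h}(a)$ and \eqref{def:loc_seminorm} involve $m$-dimensional cubes, and even granting your decoupling the $h$-average would carry the weights $\int_{M}\Psi_{h}\,d\mu$, so \eqref{def:loc_seminorm} cannot simply be ``invoked'' to produce $\| a\|_{m}^{2^{m-1}}$ (note the exponent would in any case have to be $2^{m}$). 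Closing the argument requires one more Cauchy--Schwarz/van der Corput step together with the structural fact that $m$-fold differencing trivialises an $(m-1)$-step nilsequence; this is precisely the cube-space/dual-norm machinery of \cite{Host-Kra2009}, and it cannot be replaced by unique ergodicity alone. As written, the proposal is missing the central idea of the Host--Kra proof.
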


\subsection{Pointwise jointly ergodic systems}\label{subsec:PJE}
 
Recall that a joint measure-preserving system ${\bm X} = (X, \mathcal{B}, \mu_{1}, \dots, \mu_{k}, T_{1}, \dots, T_{k})$ is pointwise jointly ergodic if for any bounded measurable functions $f_{1}, \dots, f_{k}\in L^{\infty}(X)$,  
\[ 
\lim_{N \rightarrow \infty} \frac{1}{N} \sum_{n=1}^{N} \prod_{i=1}^{k} f_{i} (T_{i}^{n} x) = \prod_{i=1}^{k} \int_{X} f_{i} \, d \mu_{i}
\]
for almost every $x\in X$.
Note that if $ {\bm X}$ is pointwise jointly ergodic, then each $(X, \mathcal{B}, \mu_{i}, T_{i})$ is ergodic and, moreover, $T_{1} \times \cdots \times T_{k}$ is ergodic with respect to the product measure $\otimes_{i=1}^{k}\mu_{i}$; see \cite{Bergelson-Son2023}*{Theorem 1.8}. 

\begin{example}~
\begin{enumerate}[{\rm (1)}]
\item Disjoint systems: A \emph{joining} of measure-preserving systems $(X_{i}, \mathcal{B}_{i}, \mu_{i}, T_{i})$, $1 \leq i \leq k$, is a probability measure $\lambda$ on $X_{1} \times X_{2} \times \cdots \times X_{k}$ invariant under $T_{1} \times T_{2} \times \cdots \times T_{k}$ such that the projection of $\lambda$ onto $X_{i}$ is $\mu_{i}$ for $i= 1, 2,\dots, k$.  
We call measure-preserving systems $(X_{i}, \mathcal{B}_{i}, \mu_{i}, T_{i})$, $1 \leq i \leq k$, \emph{disjoint} if the product measure $\otimes_{i=1}^{k}\mu_{i}$ is the only joining of systems. 
See \cite{Furstenberg1967} and \cite{Rudolph1990} for details. 

Let ${\bm X} = (X, \mathcal{B}, \mu_{1}, \mu_{2}, \dots, \mu_{k}, T_{1}, T_{2}, \dots, T_{k})$ be a joint measure-preserving system, where $X$ is a compact metric space and $\mathcal{B}$ is the Borel $\sigma$-algebra of $X$. 
It is known that  if $(X, \mathcal{B}, \mu_{i}, T_{i}), 1 \leq i \leq k,$ are ergodic and disjoint, then ${\bm X}$ is pointwise jointly ergodic; see  \cite{Berend1985}*{Theorem 2.2} and \cite{Bergelson-Son2022}*{Corollary 5.3}. 

\item Exact endomorphism or $K$-automorphism: In \cite{Derrien-Lesigne1996},  Derrien and Lesigne demonstrated that if $T$ is an exact endomorphism or a $K$-automorphism on a probability space $(X, \mathcal{B}, \mu)$, then $T, T^{2}, \dots, T^{k}$ are pointwise jointly ergodic on $(X, \mathcal{B}, \mu)$.

\item Piecewise monotone maps on the unit interval: Let $X = [0, 1]$ and $\mathcal{B}$ be the Borel $\sigma$-algebra of $X$.  
In \cite{Bergelson-Son2022}, the phenomena of pointwise joint ergodicity of piecewise monotone maps was investigated. 
Among other things, it was shown that times $b$-map $T_{b} x = bx \pmod 1$ and Gauss map $T_{G}(x) = \frac{1}{x} \pmod 1$ are pointwise jointly ergodic. 
More precisely, the joint measure-preserving system ${\bm X} = (X, \mathcal{B}, \mu, \mu_{G}, T_{b}, T_{G})$ is pointwise jointly ergodic, where $\mu$ is the Lebesgue measure on $[0,1]$ and $\mu_{G}$ is the Gauss measure on $[0,1]$ given by $\mu_{G}(A) = \frac{1}{\log 2} \int_{A} \frac{1}{1+t} \, dt$, respectively.
\end{enumerate}
\end{example}

As in the examples above, the phenonmenon of joint ergodicity take place for the case where the involved transformations may not commute. 
However, the following result implies that if  $T_{1}, \dots, T_{k}$ are commuting, then they share the same invariant measure. 

\begin{theorem}\label{th:coincidence}
Let ${\bm X} = (X, \mathcal{B}, \mu_{1}, \dots, \mu_{k}, T_{1}, \dots, T_{k})$ be a pointwise jointly ergodic system. 
Suppose that $T_{1}, \dots, T_{k}$ are commuting. 
Then one has $\mu_{1}= \dots =\mu_{k}$. 
\end{theorem}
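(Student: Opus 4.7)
The strategy is in two steps: first show that every $T_i$ preserves every $\mu_j$, and then conclude $\mu_1=\cdots=\mu_k$ from ergodicity together with mutual absolute continuity. The key observation enabling the first step is that, since the $T_i$'s commute, translating the base point of the multilinear average by $T_i$ is the same as composing every test function with $T_i$; this ``exchange'' trick is the main thing to spot, after which the rest is routine.

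For Step 1, fix $i,j$ with $i\neq j$ and bounded measurable $f_1,\dots ,f_k$. Applying pointwise joint ergodicity at the point $y=T_i x$ is legitimate for almost every $x$, because $T_i$ preserves $\mu_i$ and the $\mu_l$'s are equivalent, so the shifted ``good set'' still has full measure. Rewriting $f_l(T_l^n T_i x)=(f_l\circ T_i)(T_l^n x)$ via commutativity produces
\[
\lim_N \frac{1}{N}\sum_{n=1}^N \prod_{l=1}^k (f_l\circ T_i)(T_l^n x) = \prod_{l=1}^k \int f_l \, d\mu_l .
\]
On the other hand, applying pointwise joint ergodicity at $x$ itself but with the shifted test functions $(f_l\circ T_i)_l$ produces the same left-hand side with limit $\prod_l \int f_l\circ T_i \,d\mu_l$. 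Since $\int f_i\circ T_i \, d\mu_i=\int f_i\, d\mu_i$, equating the two limits and choosing $f_l\equiv 1$ for $l\neq j$ gives
\[
\int f_j \, d\mu_j = \int f_j\circ T_i \, d\mu_j
\]
for every bounded $f_j$; that is, $T_i$ preserves $\mu_j$.

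For Step 2, with each $T_i$ now preserving every $\mu_j$, I fix $j$ and show $\mu_j=\mu_1$ by a standard ergodicity argument. By $\mu_1$-ergodicity of $T_1$, Birkhoff's theorem yields $\frac{1}{N}\sum_{n=1}^N f(T_1^n x)\to \int f\, d\mu_1$ for $\mu_1$-a.e.\ $x$ and any bounded $f$, and hence for $\mu_j$-a.e.\ $x$ by mutual absolute continuity. Integrating this uniformly bounded sequence against $\mu_j$, using $T_1$-invariance of $\mu_j$, and invoking dominated convergence gives $\int f\, d\mu_j=\int f\, d\mu_1$ for every bounded $f$, proving $\mu_j=\mu_1$. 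The only subtle point in the whole argument is the justification of the shift $y=T_i x$ in Step 1, which relies on the invariance $(T_i)_*\mu_i=\mu_i$ together with the equivalence of the $\mu_l$'s; once this is in place, commutativity and the multilinear structure of the limit do all the work.
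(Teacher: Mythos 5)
Your proof is correct, and it follows the same two-step skeleton as the paper's (first show the measures are invariant under the ``wrong'' transformations, then identify them via ergodicity plus absolute continuity), but the invariance step is carried out by a genuinely different mechanism. The paper reduces the theorem to a standalone Proposition: if $(X,\mathcal{B},\mu,T)$ and $(X,\mathcal{B},\nu,S)$ are ergodic, $T$ and $S$ commute, and $\mu,\nu$ are mutually absolutely continuous, then $\mu=\nu$. There the $T$-invariance of $\nu$ is obtained from the single identity $\mu(S^{-n}A)=\mu(S^{-n}T^{-1}A)$ (using only $T$-invariance of $\mu$ and commutativity), followed by averaging in $n$ and invoking the Birkhoff theorem for $S$ alone; joint ergodicity is never used beyond the fact that each individual system is ergodic. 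Your exchange trick --- evaluating the multilinear average at $T_i x$ versus evaluating it at $x$ with shifted test functions --- instead uses the full strength of the pointwise joint ergodicity hypothesis, and your justification of the shift of base point (pulling back the good set by $T_i$ and using equivalence of the measures) is exactly the point that needs care and is handled correctly. The trade-off is that the paper's route isolates a more general and reusable fact about pairs of commuting ergodic systems with equivalent invariant measures, while yours is a self-contained argument within the joint-ergodicity framework; your Step 2 (Birkhoff plus dominated convergence plus invariance) is just an unpacked form of the standard fact, used identically in the paper, that an invariant measure absolutely continuous with respect to an ergodic one must coincide with it. One small presentational point: you should note (as the paper does in Section 2.4) that pointwise joint ergodicity implies each $(X,\mathcal{B},\mu_i,T_i)$ is ergodic, since your Step 2 relies on the ergodicity of $T_1$ with respect to $\mu_1$.
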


This result is an immediate consequence of the following proposition.

\begin{proposition}
\label{prop:commutingcase}
Let $(X, \mathcal{B}, \mu, T)$ and $(X, \mathcal{B}, \nu, S)$ be ergodic measure-preserving systems. 
If $T, S$ commute and $\mu, \nu$ are mutually absolutely continuous, then $\mu = \nu$. 
\end{proposition}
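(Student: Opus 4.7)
The plan is to first use commutativity and ergodicity to show that $S$ preserves $\mu$ (i.e.\ $S_{*}\mu = \mu$), and then to conclude $\mu = \nu$ from the standard fact that an ergodic invariant measure is the unique probability measure in its absolute-continuity class.

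I would first record a short auxiliary lemma, a direct consequence of the Birkhoff pointwise ergodic theorem: if $(X,\mathcal{B},\eta,R)$ is an ergodic measure-preserving system and $\lambda$ is any $R$-invariant probability measure with $\lambda \ll \eta$, then $\lambda = \eta$. The proof applies Birkhoff to $\mathbbm{1}_{A}$ on $(X,\eta,R)$, so that $\frac{1}{N}\sum_{n=0}^{N-1}\mathbbm{1}_{A}(R^{n}x) \to \eta(A)$ for $\eta$-a.e.\ $x$; since $\lambda \ll \eta$ the convergence also holds $\lambda$-a.e., and integrating against $\lambda$ using the dominated convergence theorem together with $R$-invariance of $\lambda$ (so that $\lambda(R^{-n}A)=\lambda(A)$) yields $\lambda(A) = \eta(A)$ for every $A\in\mathcal{B}$.

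The main step is to prove $S_{*}\mu = \mu$, which I do by applying the auxiliary lemma to the ergodic system $(X,\mathcal{B},\mu,T)$ with candidate measure $\lambda = S_{*}\mu$. Two things must be verified. First, $S_{*}\mu$ is $T$-invariant: by commutativity,
\[
T_{*}(S_{*}\mu) = (TS)_{*}\mu = (ST)_{*}\mu = S_{*}(T_{*}\mu) = S_{*}\mu.
\]
Second, $S_{*}\mu \ll \mu$: if $\mu(A)=0$, then $\nu(A)=0$ since $\nu\ll\mu$; hence $\nu(S^{-1}A)=0$ since $S_{*}\nu=\nu$; hence $\mu(S^{-1}A)=0$ since $\mu\ll\nu$; so $(S_{*}\mu)(A) = \mu(S^{-1}A) = 0$. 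The lemma then gives $S_{*}\mu = \mu$.

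Finally, I apply the auxiliary lemma a second time, now to the ergodic system $(X,\mathcal{B},\nu,S)$ with $\lambda = \mu$: the previous step shows that $\mu$ is $S$-invariant, and $\mu \ll \nu$ is part of the hypothesis, so the lemma yields $\mu = \nu$. I do not expect a serious obstacle; the only thing to watch is the temptation to invoke invertibility of $T$ or $S$, but both the Birkhoff-based auxiliary lemma and the commutativity identity above are valid for arbitrary measure-preserving transformations, so the argument goes through as stated.
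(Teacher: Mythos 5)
Your proof is correct, and it reaches the conclusion by a route that is organized differently from the paper's, even though both arguments ultimately rest on the same two ingredients: commutativity, and the standard fact that an ergodic measure is the unique invariant probability measure in its absolute-continuity class. The paper proves the dual intermediate claim --- that $\nu$ is $T$-invariant --- by an explicit computation: it uses commutativity to write $\mu(S^{-n}A)=\mu(S^{-n}T^{-1}A)$, averages over $n$, and identifies the limits of $\int \frac{1}{N}\sum_{n=1}^{N}\mathbbm{1}_{A}(S^{n}x)\,d\mu$ and $\int \frac{1}{N}\sum_{n=1}^{N}\mathbbm{1}_{T^{-1}A}(S^{n}x)\,d\mu$ as $\nu(A)$ and $\nu(T^{-1}A)$ via the Birkhoff theorem for $(X,\nu,S)$ together with $\mu\ll\nu$; it then invokes the uniqueness fact once ($\nu$ is $T$-invariant and $\nu\ll\mu$ with $\mu$ ergodic for $T$). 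You instead prove that $\mu$ is $S$-invariant by feeding the candidate measure $S_{*}\mu$ into the uniqueness lemma for the $T$-system: $T$-invariance of $S_{*}\mu$ is pure algebra of pushforwards, and $S_{*}\mu\ll\mu$ follows from a null-set chase through the equivalence of $\mu$ and $\nu$ and the $S$-invariance of $\nu$; you then apply the uniqueness lemma a second time to conclude. Your version confines all use of the ergodic theorem to the packaged auxiliary lemma and replaces the paper's integration argument with purely measure-theoretic manipulations, at the cost of invoking the lemma twice; the paper's version is more computational but needs the uniqueness fact only once. Both arguments are valid without assuming invertibility, as you correctly note.
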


\begin{proof}
First, we show $\nu$ is invariant under $T$. 
Note that for any measurable set $A\in \mathcal{B}$, we have
\[ 
\int_{X} \mathbbm{1}_{A} (S^{n} x) \, d \mu = \mu (S^{-n} A ) = \mu (T^{-1} S^{-n} A) = \mu (S^{-n} T^{-1}A ) = \int_{X} \mathbbm{1}_{T^{-1} A} (S^{n} x) \, d \mu,
\]
and thus 
\[ 
\int_{X} \frac{1}{N} \sum_{n=1}^{N} \mathbbm{1}_{A} (S^{n} x) \, d \mu= \int_{X} \frac{1}{N} \sum_{n=1}^{N} \mathbbm{1}_{T^{-1} A} (S^{n} x) \, d \mu. 
\]
Since $S$ is ergodic with respect to $\nu$, we have
\[
\int_{X} \frac{1}{N} \sum_{n=1}^{N} \mathbbm{1}_{A} (S^{n} x) \, d \mu \to \nu (A)\quad \text{and} \quad \int_{X} \frac{1}{N} \sum_{n=1}^{N} \mathbbm{1}_{T^{-1} A} (S^{n} x) \, d \mu \to \nu(T^{-1} A)
\]
as $N\to \infty$, respectively. 
It then follows that $\nu(A) = \nu(T^{-1} A)$, which means $\nu$ is $T$-invariant. 

Since $\mu$ is ergodic for $T$ and $\nu$ is absolutely continuous with respect to $\mu$, we have $\nu = \mu$.
\end{proof}

\section{Proof of Theorem \ref{th:mlww_pje_nil}}
\label{sec3:pf1}

The main goal of this section is to prove Theorem \ref{th:mlww_pje_nil}. 
First, we will prove the following proposition.

\begin{proposition} \label{prop:conv_unif}
Let ${\bm X}= (X, \mathcal{B}, \mu_{1}, \dots, \mu_{k}, T_{1}, \dots, T_{k})$ be a pointwise jointly ergodic system. 
Let $m\geq 2$ be an integer. 
If $f_{1}, \dots, f_{k}\in L^{\infty}(X)$ and $\VERT  f_{i} \VERT_{m, T_{i}} = 0$ for some $i\in \{1,\dots ,k\}$,
then there exists a full measure set $W_{m}\subset X$ such that if $ b = (b_{n})_{n}$ is an $(m-1)$-step nilsequence and $x \in W_{m}$, 
\[ \lim_{N \rightarrow \infty} \frac{1}{N} \sum_{n=1}^{N} b_{n} \prod_{i=1}^{k} f_{i} (T_{i}^{n} x) = 0. \]
\end{proposition}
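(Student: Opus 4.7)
The plan is to invoke Lemma \ref{lemma:HostKra:2.14} applied to the sequence $a_n := \prod_{i=1}^{k} f_i(T_i^n x)$ and the given $(m-1)$-step nilsequence $b = (b_n)_n$. Once I verify that, almost surely in $x$, the sequence $(a_n)_n$ satisfies property $\mathcal{P}(m)$ with $\|a\|_m = 0$, the lemma immediately yields
\[
\limsup_{N\to\infty}\left|\frac{1}{N}\sum_{n=1}^{N} b_n a_n\right| \leq c\|a\|_m + \delta\|a\|_\infty = \delta\|a\|_\infty
\]
for every $\delta > 0$, which forces the limit to be $0$.

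To verify property $\mathcal{P}(m)$, I expand the cube product at step $h = (h_1,\dots,h_m) \in (\mathbb{N}\cup\{0\})^m$ as
\[
\prod_{\epsilon\in\{0,1\}^m}\mathrm{C}^{|\epsilon|} a_{n+h\cdot\epsilon} = \prod_{i=1}^{k} F_i^{(h)}(T_i^n x), \qquad F_i^{(h)} := \prod_{\epsilon\in\{0,1\}^m}\mathrm{C}^{|\epsilon|}\, T_i^{h\cdot\epsilon} f_i.
\]
Each $F_i^{(h)}$ lies in $L^\infty(X)$, so pointwise joint ergodicity of $\bm X$ supplies a full measure set (depending on $h$) on which $\frac{1}{N}\sum_n \prod_i F_i^{(h)}(T_i^n x)\to \prod_i \int F_i^{(h)}\, d\mu_i$. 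Since $h$ ranges over a countable set, intersecting the corresponding null sets produces a single full measure set $W_m \subset X$ on which $\mathcal{P}(m)$ holds with $c_h(a) = \prod_i \int F_i^{(h)}\, d\mu_i$, a value independent of $x$.

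Next, I identify $\|a\|_m$ with a Host-Kra seminorm on the product system. Setting $F := f_1\otimes\cdots\otimes f_k$ and $\tilde T := T_1\times\cdots\times T_k$, Fubini gives
\[
\prod_{i=1}^{k}\int_X F_i^{(h)}\, d\mu_i = \int_{X^k}\prod_{\epsilon\in\{0,1\}^m}\mathrm{C}^{|\epsilon|}\,\tilde T^{h\cdot\epsilon} F\, d(\mu_1\otimes\cdots\otimes\mu_k),
\]
and averaging over $h$ and passing to the limit yields $\|a\|_m^{2^m} = \VERT F\VERT_{m,\tilde T}^{2^m}$. Pointwise joint ergodicity of $\bm X$ guarantees that $\tilde T$ is ergodic with respect to $\mu_1\otimes\cdots\otimes\mu_k$ (see Section \ref{subsec:PJE}), so iterating Lemma \ref{lem:product} -- singling out $T_i$ and grouping the remaining transformations into a single product factor -- gives
\[
\VERT F\VERT_{m,\tilde T} \leq \VERT f_i\VERT_{m,T_i}\prod_{j\neq i}\|f_j\|_{L^\infty(X)} = 0.
\]
Hence $\|a\|_m = 0$ on $W_m$, and Lemma \ref{lemma:HostKra:2.14} completes the proof.

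The main technical issue I anticipate is the passage from the two-factor statement of Lemma \ref{lem:product} to the needed $k$-factor version. This is handled by grouping the remaining $k-1$ transformations into a single product, for which the required ergodicity of $T_i\times(\text{rest})$ is exactly the already-available ergodicity of $\tilde T$ after reordering coordinates. A minor additional point is checking that the diagonalization over countably many $h$ is compatible with the definition of $\|a\|_m$, i.e.\ that the limit in \eqref{def:loc_seminorm} exists on $W_m$; this is built into the Fubini identification with $\VERT F\VERT_{m,\tilde T}$ above.
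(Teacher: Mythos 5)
Your proposal is correct and follows essentially the same route as the paper's proof: verify property $\mathcal{P}(m)$ almost surely via pointwise joint ergodicity on a countable intersection over $h$, identify $\|a\|_{m}$ with $\VERT f_{1}\otimes\cdots\otimes f_{k}\VERT_{m,T_{1}\times\cdots\times T_{k}}$, kill it via Lemma \ref{lem:product}, and conclude with Lemma \ref{lemma:HostKra:2.14}. Your explicit remark on upgrading the two-factor Lemma \ref{lem:product} to $k$ factors by grouping is a point the paper leaves implicit.
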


\begin{proof}[Proof of Proposition \ref{prop:conv_unif}]
Without loss of generality, we assume that $\| f_{i} \|_{L^{\infty} (X) } \leq 1$ for all $i=1, 2, \dots, k$. 
Given $x\in X$ and $n\in \mathbb{N}$, we let $a_{n}(x)=\prod_{i=1}^{k} f_{i}(T_{i}^{n} x)$. 
Since ${\bm X}$ is pointwise jointly ergodic, for any $h=(h_{1}, \dots, h_{m}) \in (\mathbb{N} \cup \{0\})^{m}$, there exists a full measure set $W_{m}(h)\subset X$ such that 
\begin{equation}\label{eqn:c_h}
\begin{split}
c_{h}(a(x))=\lim_{N \rightarrow \infty} \frac{1}{N} \sum_{n=1}^{N}  \prod_{\epsilon \in \{0,1\}^{m}} \mathrm{C}^{\vert \epsilon \vert} a_{n+h\cdot \epsilon} (x) 
&=\lim_{N \rightarrow \infty} \frac{1}{N} \sum_{n=1}^{N} \prod_{i=1}^{k} \prod_{\epsilon \in \{0,1\}^{m}} \mathrm{C}^{\vert \epsilon \vert} f_{i} (T_{i}^{n + h \cdot \epsilon}x) \\
&=\prod_{i=1}^{k}   \int_{X}  \prod_{\epsilon \in \{0,1\}^{m}} \mathrm{C}^{\vert \epsilon \vert} f_{i} \circ T_{i}^{h \cdot \epsilon} \, d \mu_{i}
\end{split}
\end{equation}
for every $x\in W_{m}(h)$. 
This means that the sequence $a(x)=(a_{n}(x))_{n}$ satisfies property $\mathcal{P}(m)$. 

Let $W_{m}=\bigcap_{h}W_{m}(h)$. 
Then $W_{m}\subset X$ is a full measure set. 
By Lemma \ref{lemma:HostKra:2.14}, for any $\delta > 0$, there exists $c= c(b, \delta)$ such that
\[ 
\limsup_{N \rightarrow \infty} \left|  \frac{1}{N} \sum_{n=1}^{N} a_{n}(x)b_{n} \right| \leq c \| a(x) \|_{m} + \delta 
\]
for every $x\in W_{m}$. 
Here note that $\| a(x) \|_{m}=\VERT f_{1}\otimes \dots \otimes f_{k}\VERT_{m,T_{1}\times \dots \times T_{k}}$. 
Indeed, in view of \eqref{def:loc_seminorm}, \eqref{eqn:c_h} and \eqref{eqn:HKseminorm2}, we have
\begin{align*}
\| a(x) \|_{m}^{2^{m}} 
&=\lim_{H \rightarrow \infty} \frac{1}{H^{m}} \sum_{h_{1}, \dots, h_{m} = 0}^{H-1} c_{h} (a(x)) \\
&=\lim_{H \rightarrow \infty} \frac{1}{H^{m}} \sum_{h_{1}, \dots, h_{m} = 0}^{H-1} \prod_{i=1}^{k} \int_{X}  \prod_{\epsilon \in \{0,1\}^{m}} \mathrm{C}^{\vert \epsilon \vert} f_{i} \circ T_{i}^{h \cdot \epsilon} \, d \mu_{i} \\
&=\lim_{H \rightarrow \infty} \frac{1}{H^{m}} \sum_{h_{1}, \dots, h_{m} = 0}^{H-1} \int_{X^{k}}  \prod_{\epsilon \in \{0,1\}^{m}} \mathrm{C}^{\vert \epsilon \vert} f_{1}\otimes \dots \otimes f_{k} \circ (T_{1}\times \dots \times T_{k})^{h \cdot \epsilon} \, d\mu_{1}\otimes \dots \otimes d\mu_{k} \\
&=\VERT f_{1}\otimes \dots \otimes f_{k}\VERT_{m,T_{1}\times \dots \times T_{k}}^{2^{m}}.  
\end{align*}

Since it is assumed that $\VERT  f_{i} \VERT_{m, T_{i}} = 0$ for some $i\in \{1,\dots ,k\}$, it follows from Lemma \ref{lem:product} that $\| a(x) \|_{m}=0$ for every $x\in W_{m}$, and hence
\[ 
\limsup_{N \rightarrow \infty} \left|  \frac{1}{N} \sum_{n=1}^{N} a_{n}(x)b_{n} \right| \leq  \delta 
\]
Then the result follows since $\delta >0$ can be chosen arbitrarily small. 
\end{proof}

Now we prove Theorem \ref{th:mlww_pje_nil}.
\begin{proof}[Proof of Theorem \ref{th:mlww_pje_nil}]
 For a joint measure-preserving system ${\bm X}=(X,\mathcal{B}, \mu_{1}, \dots, \mu_{k}, T_{1}, \dots, T_{k})$, we say that $k$ is order of ${\bm X}$.
The proof proceeds by induction on $k \in \mathbb{N}$. 
The case $k=1$ is the Wiener-Wintner theorem for nilsequences due to Host and Kra \cite{Host-Kra2009}*{Theorem 2.22}: there exists a full measure set $X_{f_{1}}\subset X$ such that for every $x\in X_{f_{1}}$, the averages 
\[
\frac{1}{N} \sum_{n=1}^{N} b_{n} \cdot f_{1} (T_{1}^{n} x)
\]
converges as $N \to \infty$ for every nilsequence $(b_{n})_{n}$. 

Suppose that the statement holds up to any pointwise jointly ergodic measure-preserving system with order $(k-1)$ for some integer $k \geq 2$.
Now consider a pointwise jointly ergodic system ${\bm X}=(X,\mathcal{B}, \mu_{1}, \dots, \mu_{k}, T_{1}, \dots, T_{k})$ of order $k\geq 2$.  
Let $f_{1}, f_{2}, \dots, f_{k} \in L^{\infty}(X)$. 
Without loss of generality, we assume that $\| f_{i} \|_{L^{\infty}} \leq 1$ for $i = 1, 2, \dots, k$.
By the induction hypothesis, there exists a full measure set $X_{f_{2}, \dots, f_{k}} \subset X$ such that for every $x \in X_{f_{2}, \dots, f_{k}} $, the averages 
\[
\frac{1}{N} \sum_{n=1}^{N} b_{n} \cdot f_{2} (T_{2}^{n} x) \cdots f_{k} (T_{k}^{n} x)
\]
converges as $N \to \infty$ for every nilsequence $b=(b_{n})_{n}$. 

Fix an integer $m \in \mathbb{N}$. 
In view of Theorem \ref{thm:HK-structure} (the Host-Kra ergodic structure theorem) for the ergodic system $(X,\mu_{1},T_{1})$, for any $r \in \mathbb{N}$ one has
\[ 
f_{1} = u^{(r)} + v^{(r)}+ f_{\rm sml}^{(r)},
\]
where 
\begin{enumerate}[(i)]
\item $\| u^{(r)}\|_{L^{\infty}(X)}\leq \| f_{1}\|_{L^{\infty}(X)}$ and $(u^{(r)} (T_{1}^{n} x))_{n}$ is an $(m-1)$-step nilsequence for almost every $x\in X$;
\item the function $v^{(r)}\in L^{\infty}(X)$ satisfies $\VERT v^{(r)} \VERT_{m,T_{1}} = 0$;
\item the function $f_{\rm sml}^{(r)} \in L^{1}(X)$ satisfies $\| f_{\rm sml}^{(r)} \|_{L^{1}(X)} < 1/r^{3}$. 
\end{enumerate}

Let $Y_{1,m}\subset X$ be a full measure set such that if $x \in Y_{1,m}$, then $(u^{(r)} (T_{1}^{n} x))_{n}$ is an $(m-1)$-step nilsequence for any $r \in \mathbb{N}$. 

By Proposition \ref{prop:conv_unif}, there exists a full measure set $Y_{2,m}\subset X$ such that for $x \in Y_{2,m}$,
\begin{equation}\label{eq:unif_vanish}
\lim_{N\to \infty}\frac{1}{N} \sum_{n=1}^{N} b_{n} \cdot v^{(r)} (T_{1}^{n} x) f_{2} (T_{2}^{n} x)  \cdots f_{k}(T_{k}^{n} x) =0
\end{equation}
for all $(m-1)$-step nilsequence $(b_{n})_{n}$ and for any $r \in \mathbb{N}$. 

Define $B^{*} = \limsup\limits_{r\to \infty} B_{r} =\bigcap_{q=1}^{\infty} \bigcup_{r=q}^{\infty} B_{r}$, where 
\[
B_{r} = \left\{ x\in X\colon  \sup\limits_{N \in \mathbb{N}} \frac{1}{N} \sum\limits_{n=1}^{N} \left\vert f_{\rm sml}^{(r)} \right\vert (T_{1}^{n} x) > \frac{1}{r} \right\} 
\]
for $r\in \mathbb{N}$.
Let $Y_{3,m}=X\setminus B^{*}$. 
By the maximal ergodic inequality and item (iii) above,  
\[
\mu_{1} (B_r) \leq r \| f_{\rm sml}^{(r)} \|_{L^{1}(\mu_{1})} \leq \frac{1}{r^{2}},
\]
and hence $\sum\limits_{r=1}^{\infty}\mu_{1}(B_{r}) < \infty$. 
Then, by the Borel-Cantelli lemma, one has $\mu_{1}(B^{*})=0$. 
Thus $Y_{3,m}$ is a full measure set. 

Let $X_{m} = X_{f_{2}, \dots, f_{k}} \cap Y_{1,m} \cap Y_{2,m} \cap Y_{3,m}$. 
Let us show that for any $x \in X_{m}$, the averages 
\begin{equation}\label{eq:proof:average}
\frac{1}{N} \sum_{n=1}^{N} b_{n} \cdot f_{1}(T_{1}^{n} x) f_{2} (T_{2}^{n} x) \cdots f_{k} (T_{k}^{n} x)
\end{equation}
converge as $N \to \infty$ for any $(m-1)$-step nilsequence $(b_{n})_{n}$.
Define
\[
F_{M,N,b}(x) = \left\vert \frac{1}{N} \sum_{n=1}^{N} b_{n} \prod_{i=1}^{k}f_{i} (T_{i}^{n} x) - \frac{1}{M} \sum_{n=1}^{M} b_{n} \prod_{i=1}^{k}f_{i} (T_{i}^{n} x) \right\vert.
\]
By the triangle inequality, we have
\begin{align}
F_{M,N,b}(x)
&\leq \left\vert \frac{1}{N} \sum_{n=1}^{N} b_{n} \cdot u^{(r)}  (T_{1}^{n} x)\prod_{i=2}^{k}f_{i} (T_{i}^{n} x) - \frac{1}{M} \sum_{n=1}^{M} b_{n} \cdot u^{(r)}  (T_{1}^{n} x)\prod_{i=2}^{k}f_{i} (T_{i}^{n} x) \right\vert  \label{ineq:nil} \\
&\quad +\left\vert \frac{1}{N} \sum_{n=1}^{N} b_{n} \cdot v^{(r)} (T_{1}^{n} x)\prod_{i=2}^{k}f_{i} (T_{i}^{n} x)\right\vert + \left\vert\frac{1}{M} \sum_{n=1}^{M} b_{n} \cdot v^{(r)} (T_{1}^{n} x)\prod_{i=2}^{k}f_{i} (T_{i}^{n} x) \right\vert \label{ineq:unif}\\
&\quad +\left\vert \frac{1}{N} \sum_{n=1}^{N} b_{n} \cdot f_{\rm sml}^{(r)} (T_{1}^{n} x)\prod_{i=2}^{k}f_{i} (T_{i}^{n} x) - \frac{1}{M} \sum_{n=1}^{M} b_{n} \cdot f_{\rm sml}^{(r)} (T_{1}^{n} x)\prod_{i=2}^{k}f_{i} (T_{i}^{n} x) \right\vert . \label{ineq:sml}
\end{align}

Since $(u^{(r)} (T_{1}^{n} x))_{n}$ is an $(m-1)$-step nilsequence for every $x \in X_{m} \subset Y_{1, m}$, so is the sequence $(b_{n}\cdot u^{(r)} (T_{1}^{n} x))_{n}$. 
Then, for $x \in X_{m} \subset X_{f_{2}, \dots, f_{k}}$, the right-hand side of \eqref{ineq:nil} vanishes as $M, N \rightarrow \infty$. 
Also if $x \in X_{m} \subset Y_{2, m}$, then, by \eqref{eq:unif_vanish}, two terms in \eqref{ineq:unif} vanish as $M, N \rightarrow \infty$.
Therefore, for every $x \in X_{m}$ and $r \in \mathbb{N}$,
\begin{align*}
\limsup_{M, N \rightarrow \infty} F_{M,N,b}(x) 
&\leq \limsup_{M, N \rightarrow \infty} \left| \frac{1}{N} \sum_{n=1}^{N} b_{n} \cdot f_{\rm sml}^{(r)} (T_{1}^{n} x) \prod_{i=2}^{k}f_{i} (T_{i}^{n} x)  - \frac{1}{M} \sum_{n=1}^{M} b_{n} \cdot f_{\rm sml}^{(r)} (T_{1}^{n} x) \prod_{i=2}^{k}f_{i} (T_{i}^{n} x) \right| \\
&\leq 2 \sup_{N\in \mathbb{N}} \left| \frac{1}{N} \sum_{n=1}^{N} b_{n} \cdot f_{\rm sml}^{(r)} (T_{1}^{n} x) \prod_{i=2}^{k}f_{i} (T_{i}^{n} x) \right|
\leq 2 \| b\|_{\infty} \sup_{N\in \mathbb{N}} \frac{1}{N} \sum_{n=1}^{N} \left\vert f_{\rm sml}^{(r)} \right\vert (T_{1}^{n} x)
\end{align*}
since $\| f_{i} \|_{L^{\infty}} \leq 1$.
For $x \in X_{m} \subset Y_{3, m}$, there exists $q \in \mathbb{N}$ such that $x \notin B_{r}$ for any $r \geq q$, thus 
\begin{equation}
\limsup_{M, N \rightarrow \infty} F_{M,N,b}(x)  \leq 2 \| b\|_{\infty} \sup_{N \in \mathbb{N}} \frac{1}{N} \sum_{n=1}^{N} \left\vert f_{\rm sml}^{(r)} \right\vert (T_{1}^{n} x) \leq \frac{2 \| b\|_{\infty}}{r}.
\end{equation}
Since $r\in \mathbb{N}$ is arbitrarily large, $\limsup\limits_{M, N \rightarrow \infty} F_{M,N,b}(x) = 0$ on $X_{m}$. 
Hence, for $x \in X_{m}$, the averages in \eqref{eq:proof:average} converge for any $(m-1)$-step nilsequece $(b_{n})_{n}$.

Now we take $X_{f_{1}, \dots, f_{k}} = \bigcap_{m\in \mathbb{N}} X_{m}$. 
Then we obtain the desired result.
\end{proof}

We end this section by showing a result, which is a consequence of Proposition \ref{prop:conv_unif}. 
\begin{corollary} \label{cor:conv_weakmix}
Let ${\bm X} = (X, \mathcal{B}, \mu_{1}, \dots, \mu_{k}, T_{1}, \dots, T_{k})$ be a pointwise jointly ergodic system. 
Suppose that each $T_{i}$ is weakly mixing. 
Then for any $f_{1}, \dots, f_{k}\in L^{\infty}(X)$, there exists a full measure set $X_{0}\subset X$ such that 
\begin{equation*}
\lim_{N \rightarrow \infty} \frac{1}{N} \sum_{n=1}^{N} b_{n} \prod_{i=1}^{k} f_{i} (T_{i}^{n} x) =  \left[ \lim_{N \rightarrow \infty} \frac{1}{N} \sum_{n=1}^{N} b_{n}\right] \cdot \prod_{i=1}^{k} \int_{X} f_{i} \, d \mu_{i} 
\end{equation*}
for any $x \in X_{0}$ and for any nilsequence $(b_{n})_{n\in \mathbb{N}}$. 
\end{corollary}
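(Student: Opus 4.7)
The plan is to reduce the general case to Proposition \ref{prop:conv_unif} via the standard ``mean-zero plus constant'' decomposition, exploiting that weak mixing trivialises the Host--Kra seminorms. For each $i$, write $f_i = c_i + g_i$, where $c_i = \int_X f_i\, d\mu_i$ and $g_i = f_i - c_i$. Since $(X,\mathcal{B},\mu_i,T_i)$ is weakly mixing, one has $\VERT g_i \VERT_{m,T_i} = \bigl|\int_X g_i\, d\mu_i\bigr| = 0$ for every $m \geq 1$, as recalled in Section~\ref{subsec:HKtheory}.

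Next, expand the product across indices:
\[
\prod_{i=1}^{k} f_i(T_i^n x) \;=\; \sum_{S \subseteq \{1,\dots,k\}} \Bigl(\prod_{i \notin S} c_i\Bigr) \prod_{i \in S} g_i(T_i^n x).
\]
The term $S = \emptyset$ equals the constant $\prod_i c_i = \prod_i \int f_i\, d\mu_i$, so after weighting by $b_n$ and averaging it contributes $\bigl(\prod_i \int f_i\, d\mu_i\bigr) \cdot \tfrac{1}{N}\sum_{n=1}^{N} b_n$, which converges because every nilsequence admits a Cesàro average (basic nilsequences $\phi(\tau^n x_0)$ converge by unique ergodicity on the orbit closure $\overline{\{\tau^n x_0\}}$, and general nilsequences are uniform limits of basic ones). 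For each nonempty $S$, view the summand as $\prod_{i=1}^k h_i^{(S)}(T_i^n x)$ with $h_i^{(S)} = g_i$ if $i \in S$ and $h_i^{(S)} \equiv c_i$ otherwise; for any $j \in S$ we then have $\VERT h_j^{(S)} \VERT_{m,T_j} = \VERT g_j\VERT_{m,T_j} = 0$ by weak mixing.

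Now apply Proposition~\ref{prop:conv_unif} to the tuple $(h_1^{(S)},\dots,h_k^{(S)})$ for each nonempty $S$ and each integer $m \geq 2$: this yields a full-measure set $W_m^{(S)} \subset X$ on which
\[
\lim_{N \to \infty} \frac{1}{N} \sum_{n=1}^{N} b_n \prod_{i=1}^{k} h_i^{(S)}(T_i^n x) \;=\; 0
\]
for every $(m-1)$-step nilsequence $(b_n)_n$. Setting $X_0 = \bigcap_{\emptyset \neq S \subseteq \{1,\dots,k\}} \bigcap_{m \geq 2} W_m^{(S)}$ produces a single full-measure set, and any nilsequence is an $(m-1)$-step nilsequence for some $m$, so all nonempty-$S$ contributions vanish on $X_0$ uniformly in $(b_n)_n$. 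Combining this with the $S = \emptyset$ term yields the claimed identity.

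I expect no serious obstacle: weak mixing collapses the seminorms exactly as needed, the expansion is finite, and the convergence of the Cesàro average of a nilsequence is a standard fact. The only point worth double-checking is that Proposition~\ref{prop:conv_unif} applies when some of the ``test functions'' are constants --- but constants are bounded and measurable, so the proposition is formally applicable, and the zero-seminorm hypothesis is forced by the existence of at least one $g_j$ factor within the product for each $S \neq \emptyset$.
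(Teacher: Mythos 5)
Your proposal is correct and follows essentially the same route as the paper: the same decomposition $f_i = \int_X f_i\,d\mu_i + (f_i - \int_X f_i\,d\mu_i)$, the same expansion of the product over subsets with at least one mean-zero factor, the same use of weak mixing to force $\VERT f_i - \int f_i\,d\mu_i \VERT_{m,T_i} = 0$, and the same application of Proposition \ref{prop:conv_unif} followed by intersecting the full-measure sets over $m$. Your extra remark justifying the convergence of the Ces\`aro averages of a nilsequence is a detail the paper leaves implicit, but it is the standard fact and does not change the argument.
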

\begin{proof}
Given an integer $m\geq 2$, let $(b_{n})_{n\in \mathbb{N}}$ be an $(m-1)$-step nilsequence. 
For $i = 1, 2, \dots, k$, define 
\[
f_{i}^{(0)} = f_{i} -  \int_{X} f_{i} \, d \mu_{i} \quad \text{and} \quad f_{i}^{(1)} = \int_{X} f_{i} \, d \mu_{i}. 
\]
Hence $f_{i} = f_{i}^{(0)} + f_{i}^{(1)}$ and $\int_{X} f_{i}^{(0)} \, d \mu_{i} = 0$ for every $i\in \{1,\dots,k\}$. 
We have 
\begin{equation*}
\frac{1}{N} \sum_{n=1}^{N} b_{n} \prod_{i=1}^{k} f_{i} (T_{i}^{n} x) = \frac{1}{N} \sum_{n=1}^{N} b_{n} \prod_{i=1}^{k} \int_{X} f_{i} \, d\mu_{i} + \sum_{ \substack {(j_{1}, \dots, j_{k}) \in \{0,1\}^{k} ; \\ (j_{1}, \dots, j_{k}) \ne (1,\dots,1)}} \frac{1}{N}\sum_{n=1}^{N} b_{n} \prod_{i=1}^{k} f_{i}^{(j_{i})} (T_{i}^{n} x).
\end{equation*}
Note here that each term $\prod_{i=1}^{k} f_{i}^{(j_{i})} (T_{i}^{n} x)$ in the sum above contains at least one $f_{i}^{(j_{i})}$ with $j_{i}=0$. 
Note also that for every $i\in \{1,\dots,k\}$ we have
\[
\left\VERT f_{i}^{(0)} \right\VERT_{m, T_{i}}=\left\vert \int_{X} f_{i}^{(0)} \, d \mu_{i}\right\vert =0
\]
since $T_{i}$ is weakly mixing. 
Hence it follows from Proposition \ref{prop:conv_unif} that there exists a full measure set $W_{m}\subset X$ such that for every $x\in W_{m}$
\[
\lim_{N \rightarrow \infty} \frac{1}{N} \sum_{n=1}^{N} b_{n} \prod_{i=1}^{k} f_{i} (T_{i}^{n} x) =  \left[ \lim_{N \rightarrow \infty} \frac{1}{N} \sum_{n=1}^{N} b_{n}\right] \cdot \prod_{i=1}^{k} \int_{X} f_{i} \, d \mu_{i}. 
\]
Letting $X_{0}=\cap_{m\in \mathbb{N}}W_{m}$, the proof of Corollary \ref{cor:conv_weakmix} is obtained. 
\end{proof}

\section{Multiple ergodic averages along the sequence $\lfloor \alpha n \rfloor$} 
\label{sec4:pf2}
 
In this section, we explore the limiting behavior of the following averages 
\begin{equation}
\label{eq:along:integer}
 \frac{1}{N} \sum_{n=1}^{N} f_{1} (T_{1}^{\lfloor \alpha n \rfloor} x) \cdots f_{k} (T_{k}^{\lfloor \alpha n \rfloor} x),
\end{equation}
where $\alpha \geq 1$ and $\lfloor \theta \rfloor$ is the greatest integer less than or equal to $\theta \in \mathbb{R}$.
As we will see, the averages \eqref{eq:along:integer} can be seen as the multiple ergodic averages weighted by a Besicovitch almost periodic sequence. 

In what follows, we first recall the notion of Besicovitch almost periodic function. 
Then, we will prove Theorem \ref{thm:multiple_bap} in Section \ref{subsec:proof_th1.5} and Theorem \ref{thm:equiv_multiple_bap} in Section \ref{subsec:proof_th1.6}, respectively.


Throughout this section, let $\mathbbm{1}_{E}$ denote the indicator function of a set $E\subset \mathbb{R}$, that is,
\[
\mathbbm{1}_{E}(x)=
\begin{cases}
1, & x\in E, \\
0, & x\not\in E.
\end{cases}
\]
For $\theta \in \mathbb{R}$, we write $\mathrm{e}(\theta) = \exp{(2 \pi \sqrt{-1} \theta)}$ for brevity. 
Let $\{\theta\}=\theta -\lfloor \theta \rfloor$ represent the fractional part of $\theta$. 
By a slight abuse of notation, for $a\in [0,1)$, we write $\mathbbm{1}_{(a,1]}(\{\theta\})$ instead of $\mathbbm{1}_{\{0\}\cup (a,1)}(\{\theta\})$. 
In other words, the endpoint $1$ is identified with $0$ for the interval $[0,1)$, which is the range set of $\theta \mapsto \{\theta\}$. 

\subsubsection*{Besicovitch almost periodic sequences}

\begin{definition}
A function $f\colon \mathbb{N} \rightarrow \mathbb{C}$ is \emph{Besicovitch almost periodic} if for any $\epsilon > 0$, there exist  $c_{1}, \dots, c_{q}\in \mathbb{C}$ and $\beta_{1}, \dots, \beta_{q}\in \mathbb{R}$ such that 
\begin{equation}
\label{def:besicovitch_ap}
\limsup_{N \rightarrow \infty} \frac{1}{N} \sum_{n=1}^{N} \left| f(n) - \sum_{j=1}^{q} c_{j} \mathrm{e} \left( \beta_{j} n\right) \right| < \epsilon .
\end{equation}
\end{definition}

It is known that if $K$ is a compact Abelian group, $a \in K$ and $g\colon K \rightarrow \mathbb{C}$ is a Riemann integrable function, then $f(n):=g(an)$ is known to be Besicovitch almost periodic. 
(See \cite{Besicovitch1955} for more detail. 
See also \cite{Bellow-Losert1985}*{Section 3} and \cite{Bergelson-Moreira2016}*{Section 4}.)

For $\alpha \geq 1$, let $\beta = 1/\alpha$ and $A = \{ \lfloor \alpha m\rfloor \colon m \in \mathbb{Z} \}$. 
Given $n\in \mathbb{N}$, we see that $n=\lfloor \alpha m \rfloor$ for some $m \in \mathbb{Z}$ if and only if $ \beta n \leq m <  \beta n + \beta$, and hence we see that $n \in A$ if and only if $\beta n\pmod 1\in (1-\beta, 1]$. 
Thus, for every $n\in \mathbb{Z}$, we have
\begin{equation}\label{eq:A_beta}
\mathbbm{1}_{A} (n) = \mathbbm{1}_{(1-\beta, 1]} ( \{ \beta n \}).
\end{equation}
(Note that $1-\beta \in [0,1)$. We use the convention mentioned above on the right-hand side of \eqref{eq:A_beta}.)
Therefore, the function $\mathbbm{1}_{A}$ is Besicovitch almost periodic. 
Later, in Lemma \ref{lem:Bap_modify}, we show that for $f=\mathbbm{1}_{A}$, each $\beta_{j}\in \mathbb{R}$ in \eqref{def:besicovitch_ap} can be taken from the set $\{ m/\alpha \colon m \in \mathbb{Z} \}$. 


Now, we see that the study of limiting behavior of averages \eqref{eq:along:integer} is reduced to the study of the weighted averages of the form
\begin{equation*}
\label{eq:along:integerset}
 \frac{1}{ N } \sum_{n=1}^{ N }  \mathbbm{1}_{A} (n) f_{1} (T_{1}^{n} x) \cdots f_{k} (T_{k}^{n} x)
 \end{equation*}
since, for any sequence $(F_{n})_{n \in \mathbb{N}}$, 
\begin{equation}
\label{eq:equality:seq:set}
 \lim_{N \rightarrow \infty} \frac{1}{N} \sum_{n=1}^{N} F_{\lfloor \alpha n \rfloor} = \lim_{N \rightarrow \infty} \frac{\lfloor N \alpha \rfloor}{N} \frac{1}{\lfloor N \alpha \rfloor} \sum_{n=1}^{\lfloor N \alpha \rfloor}  \mathbbm{1}_{A} (n) F_{n} 
 = \alpha \cdot \lim_{N \rightarrow \infty} \frac{1}{ N} \sum_{n=1}^{N}  \mathbbm{1}_{A} (n) F_{n}
 \end{equation}
if the last limit exists. 

\subsection{Proof of Theorem \ref{thm:multiple_bap}} \label{subsec:proof_th1.5}
Now we provide the proof of Theorem \ref{thm:multiple_bap}.
\begin{proof}[Proof of Theorem \ref{thm:multiple_bap}]
We assume, without loss of generality, that $\| f_{i} \|_{L^{\infty}} \leq 1$ for $i = 1, 2, \dots, k$. 
For brevity, write $F_{n}(x)=\prod_{i=1}^{k}f_{i}(T_{i}^{n} x)$. 
Note that $|F_{n}(x)| \leq 1$ for every $n\in \mathbb{N}$ and for almost every $x \in X$.
By \eqref{eq:equality:seq:set}, it is sufficient to show that there exists a full measure set $X_{0}\subset X$ such that for $x \in X_{0}$,  
\begin{equation}
\label{eq:goal_th:multiple_bap}
\limsup_{M, N \rightarrow \infty} \left\vert \frac{1}{N} \sum_{n=1}^{N} \mathbbm{1}_{A} (n) F_{n}(x) -  \frac{1}{M} \sum_{n=1}^{M} \mathbbm{1}_{A} (n)F_{n}(x) \right\vert =0,
\end{equation}
where $A = \{ \lfloor \alpha m\rfloor \colon m \in \mathbb{Z} \}$. 

Since $\mathbbm{1}_{A}$ is a Besicovitch almost periodic function, given $r \in \mathbb{N}$, there exists a trigonometric polynomial $P_{r} (n) = \sum\limits_{j=1}^{q} c_{j} \mathrm{e} \left( \beta_{j} n\right)$ such that 
\begin{equation} \label{ineq:approx_1/m}
\limsup_{N \rightarrow \infty} \frac{1}{N} \sum_{n=1}^{N} \left| \mathbbm{1}_{A}(n) - P_{r}(n) \right| < \frac{1}{r} .
\end{equation}

Recall that for every $\beta \in \mathbb{R}$ the sequence $(\mathrm{e} \left( \beta n\right))_{n}$ is an 1-step nilsequence.
Therefore, by Theorem \ref{th:mlww_pje_nil}, there exists a full measure set $X_{0}\subset X$ such that for any $\beta \in \mathbb{R}$ and for  $x \in X_{0}$, the averages 
\[
\frac{1}{N} \sum_{n=1}^{N} \mathrm{e} \left( \beta n\right) f_{1}(T_{1}^{n} x) \cdots f_{k} (T_{k}^{n} x),
\]
and thus 
\begin{equation}\label{cond:conv_beta}
\lim_{M, N \to \infty} \left\vert \frac{1}{N} \sum_{n=1}^{N} P_{r}(n) F_{n}(x) -  \frac{1}{M} \sum_{n=1}^{M} P_{r}(n) F_{n}(x) \right\vert = 0
\end{equation}
for every $x \in X_{0}$.     
Then notice that 
\begin{align}
\label{eq4.8}
&\left\vert \frac{1}{N} \sum_{n=1}^{N} \mathbbm{1}_{A} (n)F_{n}(x) -  \frac{1}{M} \sum_{n=1}^{M} \mathbbm{1}_{A} (n)F_{n}(x) \right\vert \notag \\
&\leq  \left\vert \frac{1}{N} \sum_{n=1}^{N} \mathbbm{1}_{A} (n)F_{n}(x) -  \frac{1}{N} \sum_{n=1}^{N} P_{r}(n)F_{n}(x) \right\vert + \left\vert \frac{1}{N} \sum_{n=1}^{N} P_{r}(n) F_{n}(x) -  \frac{1}{M} \sum_{n=1}^{M} P_{r}(n) F_{n}(x) \right\vert \notag \\
&\quad + \left\vert \frac{1}{M} \sum_{n=1}^{M} P_{r} (n)F_{n}(x) -  \frac{1}{M} \sum_{n=1}^{M} \mathbbm{1}_{A} (n)F_{n}(x) \right\vert \notag \\
&\leq \frac{1}{N} \sum_{n=1}^{N} \left\vert \mathbbm{1}_{A} (n)- P_{r}(n)\right\vert + \left\vert \frac{1}{N} \sum_{n=1}^{N} P_{r}(n) F_{n}(x) -  \frac{1}{M} \sum_{n=1}^{M} P_{r}(n) F_{n}(x) \right\vert + \frac{1}{M} \sum_{n=1}^{M} \left\vert P_{r} (n) -  \mathbbm{1}_{A} (n) \right\vert .
\end{align}
Then, for any $x \in X_{0}$, we have \eqref{eq:goal_th:multiple_bap} by \eqref{ineq:approx_1/m}, \eqref{cond:conv_beta}, and \eqref{eq4.8}. 
The proof of Theorem \ref{thm:multiple_bap} is obtained. 
\end{proof}



\subsection{Proof of Theorem \ref{thm:equiv_multiple_bap}}\label{subsec:proof_th1.6}


Before embarking on the proof of Theorem \ref{thm:equiv_multiple_bap}, we need to introduce three lemmas. 
\begin{lemma}\label{lem:unif_distr}
Let $\alpha$ be a non-zero real number. 
Then, for any $m \in \mathbb{Z}$
\[ 
\lim_{N \rightarrow \infty} \frac{1}{N} \sum_{n=1}^{N} \mathrm{e} \left( \lfloor \alpha n \rfloor \frac{m}{\alpha}\right) \ne 0.
\]
\end{lemma}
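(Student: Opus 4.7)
The plan is to peel off the integer translate and reduce to a fractional-part sum. Using $\lfloor \alpha n \rfloor = \alpha n - \{\alpha n\}$ and the fact that $mn \in \mathbb{Z}$ (so $\mathrm{e}(mn) = 1$), one has
\[
\mathrm{e}\!\left(\lfloor \alpha n \rfloor \tfrac{m}{\alpha}\right) = \mathrm{e}(mn)\cdot \mathrm{e}\!\left(-\tfrac{m}{\alpha}\{\alpha n\}\right) = \mathrm{e}\!\left(-\tfrac{m}{\alpha}\{\alpha n\}\right).
\]
The case $m = 0$ gives limit $1$ trivially, so I would assume $m \neq 0$ and, by conjugation symmetry, $\alpha > 0$, then split on whether $\alpha$ is rational.

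If $\alpha$ is irrational, Weyl's equidistribution theorem says that $(\{\alpha n\})_{n}$ is uniformly distributed modulo $1$. Since $t \mapsto \mathrm{e}(-(m/\alpha)t)$ is Riemann integrable on $[0,1]$, the Cesaro average converges to
\[
\int_0^1 \mathrm{e}\!\left(-\tfrac{m}{\alpha} t\right) dt = \frac{1 - \mathrm{e}(-m/\alpha)}{2\pi i\,(m/\alpha)}.
\]
This is nonzero because $m/\alpha \notin \mathbb{Z}$: if $m/\alpha = k$ for some (necessarily nonzero) integer $k$, then $\alpha = m/k \in \mathbb{Q}$, contradicting irrationality.

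For rational $\alpha = p/q$ in lowest terms with $q \geq 1$, the sequence $(\{\alpha n\})_n = (\{pn/q\})_n$ is periodic with period $q$, and as $n$ ranges over a full period $\{1, \dots, q\}$ the residue $pn \bmod q$ takes each value in $\{0, 1, \dots, q-1\}$ exactly once (by $\gcd(p,q)=1$). Hence the Cesaro limit equals
\[
\frac{1}{q}\sum_{j=0}^{q-1} \mathrm{e}\!\left(-\tfrac{m}{\alpha}\cdot\tfrac{j}{q}\right) = \frac{1}{q}\sum_{j=0}^{q-1} \mathrm{e}\!\left(-\tfrac{mj}{p}\right).
\]
If $p \mid m$ every term is $1$ and the sum equals $1$. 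Otherwise $\zeta := \mathrm{e}(-m/p) \neq 1$ is a nontrivial $p$-th root of unity, and the geometric sum becomes $(\zeta^{q} - 1)/(\zeta - 1)$; the numerator vanishes iff $p \mid mq$, which by $\gcd(p,q)=1$ forces $p \mid m$, contradicting $p \nmid m$. Therefore the limit is nonzero in every case.

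There is no serious obstacle: once the observation $\mathrm{e}(mn) = 1$ strips away the integer part, the statement becomes a clean equidistribution-versus-periodicity dichotomy. The only subtlety is checking that the geometric sum of $p$-th roots of unity does not accidentally vanish in the rational case, which is handled directly via the coprimality of $p$ and $q$.
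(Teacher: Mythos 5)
Your proof is correct and follows essentially the same route as the paper: strip off the integer part to reduce to an average of $\mathrm{e}\left(-\frac{m}{\alpha}\{\alpha n\}\right)$, then split into the rational case (periodicity plus a root-of-unity sum) and the irrational case (Weyl equidistribution); you in fact supply the coprimality argument showing the geometric sum $\sum_{j=0}^{q-1}\mathrm{e}(-mj/p)$ cannot vanish, a point the paper merely asserts. One cosmetic caveat: the reduction to $\alpha>0$ is not literally a conjugation symmetry (conjugating the sum flips the sign of $m$, not of $\alpha$), but this is immaterial since neither of your two cases actually uses positivity of $\alpha$.
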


\begin{proof}
If $\alpha \in \mathbb{Z}$, then $\mathrm{e} \left( \lfloor \alpha n \rfloor \frac{m}{\alpha}\right) = 1$, and hence $\lim\limits_{N \rightarrow \infty} \frac{1}{N} \sum\limits_{n=1}^{N} \mathrm{e} \left( \lfloor \alpha n \rfloor \frac{m}{\alpha}\right) = 1$. 
If $\alpha \in \mathbb{Q} \setminus \mathbb{Z}$, let $\alpha = p/q$, where $p, q \in \mathbb{Z}$ with $q \geq 2$ such that $(p, q) =1$. 
Then one has 
\[
\mathrm{e} \left( \lfloor \alpha n \rfloor \frac{m}{\alpha}\right)=\mathrm{e} \left( \left( \alpha n- \{\alpha n\} \right) \frac{m}{\alpha} \right)=\mathrm{e} \left( - \{\alpha n\} \frac{m}{\alpha} \right)=\mathrm{e} \left( - \left\{ \frac{p}{q} n\right\} \frac{qm}{p} \right) .
\]
Since the sequence $\left( \left\{ (p/q) n \right\} \right)_{n \in \mathbb{N}}$ is equidistributed on the set $\{ j/q\colon j = 0, 1, \dots q-1 \}$, we have
\[
\lim_{N \rightarrow \infty} \frac{1}{N} \sum_{n=1}^{N} \mathrm{e} \left( \lfloor \alpha n \rfloor \frac{m}{\alpha}\right)
= \lim_{N \rightarrow \infty} \frac{1}{N} \sum_{n=1}^{N} \mathrm{e} \left( - \left\{ \frac{p}{q} n\right\} \frac{qm}{p} \right)  
= \frac{1}{q} \sum_{j=0}^{q-1} \mathrm{e} \left( - \frac{mj}{p}\right) \ne 0.
\]

Finally, suppose $\alpha \in \mathbb{R} \setminus \mathbb{Q}$. 
Let $\beta =m/\alpha$. 
Then one has
\[ 
\lim_{N \rightarrow \infty} \frac{1}{N} \sum_{n=1}^{N} \mathrm{e} \left( \lfloor \alpha n \rfloor \beta \right) 
= \lim_{N \rightarrow \infty} \frac{1}{N} \sum_{n=1}^{N} \mathrm{e} \left( - \{\alpha n\} \beta \right)  
= \int_{0}^{1} \mathrm{e} \left( \beta x \right) \, dx  \ne 0
\]
since the sequence $(\alpha n)_{n \in \mathbb{N}}$ is uniformly distributed mod $1$.
\end{proof}

As was mentioned, for a Besicovitch almost periodic sequence  $\mathbbm{1}_{A}$, where $A = \{ \lfloor \alpha m\rfloor \colon m \in \mathbb{Z} \}$, each $\beta_{j}\in \mathbb{R}$ in \eqref{def:besicovitch_ap} can be taken from the set $\{ m/\alpha \colon m \in \mathbb{Z} \}$.

\begin{lemma} \label{lem:Bap_modify}
For $\alpha \geq 1$, let $\beta = 1/\alpha$ and $A = \{ \lfloor \alpha m\rfloor \colon m \in \mathbb{Z} \}$. 
For any $\epsilon > 0$, there exist $c_{1}, \dots, c_{q}\in \mathbb{C}$ and $m_{1}, \dots, m_{q}\in \mathbb{Z}$ such that 
\begin{equation*}
\limsup_{N \rightarrow \infty} \frac{1}{N} \sum_{n=1}^{N} \left| \mathbbm{1}_{A}(n) - \sum_{j=1}^{q} c_{j} \mathrm{e} \left(  m_{j} \beta n\right) \right| < \epsilon .
\end{equation*}
\end{lemma}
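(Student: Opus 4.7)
By \eqref{eq:A_beta} we may write $\mathbbm{1}_{A}(n) = F(\beta n)$, where $F \colon \mathbb{R}/\mathbb{Z} \to \{0,1\}$ is defined by $F(x) = \mathbbm{1}_{(1-\beta,1]}(\{x\})$ using the paper's convention $F(0)=1$. Any trigonometric polynomial of the form $P(x) = \sum_{j=1}^q c_j \mathrm{e}(m_j x)$ with $m_j \in \mathbb{Z}$, evaluated at $x=\beta n$, gives precisely $\sum_{j=1}^q c_j \mathrm{e}(m_j \beta n)$; moreover, since $m_j \in \mathbb{Z}$ one has $\mathrm{e}(m_j \beta n) = \mathrm{e}(m_j \{\beta n\})$, so that $P(\beta n) = P(\{\beta n\})$. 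Hence the task reduces to producing such a $P$ with
\[
\limsup_{N\to\infty}\frac{1}{N}\sum_{n=1}^N \bigl|F(\{\beta n\}) - P(\{\beta n\})\bigr| < \epsilon.
\]
I would do so by a dichotomy on the arithmetic of $\beta = 1/\alpha$.

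If $\beta$ is irrational, Weyl's theorem asserts that $(\{\beta n\})_{n\geq 1}$ is uniformly distributed in $[0,1)$. For any trigonometric polynomial $P$, the function $|F - P|$ is Riemann integrable (its discontinuities lie in the measure-zero set $\{0,\,1-\beta\}$), and so equidistribution applied to Riemann integrable functions yields
\[
\lim_{N \to \infty}\frac{1}{N}\sum_{n=1}^N \bigl|F(\{\beta n\}) - P(\{\beta n\})\bigr| = \int_0^1 |F(x) - P(x)|\, dx.
\]
Since trigonometric polynomials $\sum c_j \mathrm{e}(m_j x)$ with $m_j \in \mathbb{Z}$ are dense in $L^1(\mathbb{R}/\mathbb{Z})$ (for instance, via convolution with the Fejér kernel), $P$ may be chosen of the required form with $\int_0^1 |F-P|\, dx < \epsilon$.

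If $\beta$ is rational, write $\beta = p/q$ in lowest terms. Because $\gcd(p,q)=1$, the orbit $\{\{\beta n\} : n \in \mathbb{Z}\}$ is exactly the finite set $\{j/q : 0 \leq j < q\}$. I would take $P$ to be the discrete Fourier interpolant of $F$ on these $q$ nodes, namely
\[
P(x) = \sum_{k=0}^{q-1} a_k \mathrm{e}(kx), \qquad a_k = \frac{1}{q}\sum_{j=0}^{q-1} F(j/q)\,\mathrm{e}(-kj/q).
\]
Discrete Fourier inversion then gives $P(j/q) = F(j/q)$ for every $0 \leq j < q$, so $P(\{\beta n\}) = F(\{\beta n\}) = \mathbbm{1}_{A}(n)$ for every $n$, and the approximation error vanishes identically; all frequencies $k \in \{0,1,\dots,q-1\}$ are in $\mathbb{Z}$, as required.

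The principal obstacle is the rational case, where naive $L^1$-approximation on the circle cannot be transferred to the discrete time average: the Lebesgue integral is insensitive to the values of $F$ or $P$ on the finite orbit, and Weyl equidistribution fails. Discrete Fourier interpolation sidesteps this by producing an exact match on the orbit (in fact with zero error), which is stronger than the $\epsilon$-approximation claimed, while respecting the integrality constraint on the frequencies.
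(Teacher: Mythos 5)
Your proposal is correct and follows essentially the same route as the paper: a dichotomy on the rationality of $\beta$, with an exact finite (discrete) Fourier representation on the orbit $\{j/q\}$ in the rational case and trigonometric approximation combined with Weyl equidistribution of $(\beta n)$ in the irrational case. The only differences are cosmetic — the paper reaches the rational case via periodicity of $\mathbbm{1}_{A}$ with period $a$ (where $\alpha=a/b$) plus a congruence to convert frequencies into multiples of $\beta$, and in the irrational case it mollifies $\mathbbm{1}_{(1-\beta,1]}$ by a $C^{2}$ function and uses uniform convergence of its Fourier series, whereas you apply equidistribution directly to the Riemann integrable function $|F-P|$ and invoke $L^{1}$-density of trigonometric polynomials.
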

In the following proof, we identify $\mathbb{T} = \mathbb{R}/ \mathbb{Z}$. 
\begin{proof}
Let us first consider $\alpha \in \mathbb{Q}$. 
Write $\alpha = a/b$, where $a, b \in \mathbb{N}$ and $(a,b) =1$. 
Note that 
\[
n = \left\lfloor \frac{a}{b} m \right\rfloor \quad \text{if and only if} \quad n + a = \left\lfloor \frac{a}{b} (m+b) \right\rfloor,
\]
hence one has $\mathbbm{1}_{A} (n + a) = \mathbbm{1}_{A} (n)$ for any $n\in \mathbb{N}$. 
Therefore, there exist $c_{1}, \dots, c_{a} \in \mathbb{C}$ such that 
\begin{equation*}
\mathbbm{1}_{A}(n) = \sum_{j=1}^{a} c_{j} \mathrm{e} \left( \frac{j-1}{a} n \right) .
\end{equation*}
For each $j = 1, \dots, a$, one can find $m_{j} \in \mathbb{Z}$ such that $(j-1)/a \equiv m_{j} b/a \pmod a$. 
Hence it follows that
\begin{equation*}
\mathbbm{1}_{A}(n) = \sum_{j=1}^{a} c_{j} \mathrm{e} \left( m_{j}\beta n \right) .
\end{equation*}

Now consider $\alpha \notin \mathbb{Q}$. By \eqref{eq:A_beta}, we will consider $\mathbbm{1}_{(1-\beta, 1]} ( \{ \beta n \})$ instead of $\mathbbm{1}_{A} (n)$. 
For the indicator function $\mathbbm{1}_{(1-\beta, 1]}\colon \mathbb{T}\to \{ 0,1\}$, one can find a function $f \in C^{2} (\mathbb{T})$ such that  $|f (x)| \leq 1$ for all $x\in \mathbb{T}$ and $I = \{ x\in \mathbb{T} \colon  f(x) \ne \mathbbm{1}_{(1-\beta, 1]} (x) \}$ is an open interval with ${\rm Leb}_{\mathbb{T}} (I) < \epsilon /3$, where ${\rm Leb}_{\mathbb{T}}$ is the Lebesgue measure on $\mathbb{T}$. 
Since $f \in C^{2} (\mathbb{T})$, its Fourier coefficients $\hat{f} (n) = O (1/n^{2})$ as $\vert n\vert \to \infty$, and hence the Fourier series of $f$ converge to $f$ uniformly. 
Thus, there exists $c_{1}, \dots, c_{q} \in \mathbb{C}$ and $m_{1}, \dots, m_{q} \in \mathbb{Z}$ such that
\begin{equation} \label{ineq:approx}
\left| f(x) - \sum_{j=1}^{q} c_{j} \mathrm{e} \left( m_{j} x\right) \right| < \epsilon /3
\end{equation}
for every $x\in \mathbb{T}$. 
It follows from \eqref{eq:A_beta} and \eqref{ineq:approx} that 
\begin{align*}
\left| \mathbbm{1}_{A} (n) - \sum_{j=1}^{q} c_{j} \mathrm{e} \left( m_{j} \beta n\right) \right| 
&= \left| \mathbbm{1}_{(1-\beta, 1]} ( \{n \beta \}) - \sum_{j=1}^{q} c_{j} \mathrm{e} \left( m_{j} \beta n\right) \right| \\ 
&\leq \left| \mathbbm{1}_{(1-\beta, 1]} ( \{n \beta \}) - f ( \{n \beta \}) \right| +\left| f ( \{n \beta \}) - \sum_{j=1}^{q} c_{j} \mathrm{e} \left( m_{j} \beta n\right) \right| \\ 
&< \left| \mathbbm{1}_{(1-\beta, 1]} ( \{n \beta \}) - f ( \{n \beta \}) \right| +\frac{\epsilon}{3} .
\end{align*}
(Note that $\mathrm{e} \left( m \{r\}\right)=\mathrm{e} \left( m r\right)$ for every $m\in \mathbb{Z}$ and $r\in \mathbb{R}$.) 
Therefore, one has 
\begin{align*}
\frac{1}{N} \sum_{n=1}^{N} \left| \mathbbm{1}_{A} (n) - \sum_{j=1}^{q} c_{j} \mathrm{e} \left( m_{j} \beta n\right) \right| 
&< \frac{1}{N} \sum_{n=1}^{N}\left| \mathbbm{1}_{(1-\beta, 1]} ( \{n \beta \}) - f ( \{n \beta \}) \right| +\frac{\epsilon}{3} .
\end{align*}
Note here that, since $|\mathbbm{1}_{(1-\beta, 1]} - f| \leq 2 $ on $\mathbb{T}$ and $\mathbbm{1}_{(1-\beta, 1]} = f $ on $\mathbb{T}\setminus I$, we have
\begin{equation*}
\frac{1}{N} \sum_{n=1}^{N}\left| \mathbbm{1}_{(1-\beta, 1]} ( \{n \beta \}) - f ( \{n \beta \}) \right| \leq \frac{2}{N} \sum_{n=1}^{N} \mathbbm{1}_{I} ( \{n \beta \}). 
\end{equation*}
Thus it follows that 
\begin{align*}
\limsup_{N \rightarrow \infty} \frac{1}{N} \sum_{n=1}^{N} \left| \mathbbm{1}_{A} (n) - \sum_{j=1}^{q} c_{j} \mathrm{e} \left( m_{j} \beta n\right) \right| \leq  \lim_{N \rightarrow \infty} \frac{2}{N} \sum_{n=1}^{N} \mathbbm{1}_{I} (\{n \beta\}) + \frac{\epsilon}{3}  =2{\rm Leb}_{\mathbb{T}} (I) +\frac{\epsilon}{3}  < \epsilon,
\end{align*}
since $(\beta n)_{n \in \mathbb{N}}$ is uniformly distributed $\bmod \, 1$.
\end{proof}

\begin{lemma}
\label{lem:conv:mostlambda}
Let ${\bm X} = (X, \mathcal{B}, \mu_{1}, \dots, \mu_{k}, T_{1}, \dots, T_{k})$ be a pointwise jointly ergodic system. 
For any bounded measurable functions $f_{1}, \dots, f_{k}$ on $X$, there exists a full measure set $X_{0}\subset X$ such that if $\lambda \notin  E (T_{1}, \dots, T_{k})$ and $x \in X_{0}$, then   
\[ 
\lim_{N \rightarrow \infty} \frac{1}{N} \sum_{n=1}^{N} \lambda^{n} f_{1}(T_{1}^{n} x) \cdots f_{k} (T_{k}^{n} x) = 0.
\]
\end{lemma}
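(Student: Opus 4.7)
The plan is to bypass Theorem~\ref{th:mlww_pje_nil} and instead apply the van der Corput lemma (Lemma~\ref{lem:vdC}) directly to $a_n(x) = \lambda^n \prod_{i=1}^{k} f_i(T_i^n x)$; this simultaneously yields that the limit vanishes (not merely exists) and makes transparent that $X_0$ can be chosen independently of $\lambda$. Write $F = f_1 \otimes \cdots \otimes f_k \in L^\infty(X^k)$ and $\tilde T = T_1 \times \cdots \times T_k$; the product system $(X^k, \mu_1 \otimes \cdots \otimes \mu_k, \tilde T)$ is ergodic, as recalled in Section~\ref{subsec:PJE}. For each $h \in \mathbb{N}$ the functions $y \mapsto f_i(T_i^h y)\overline{f_i(y)}$ are bounded, so pointwise joint ergodicity of ${\bm X}$ supplies a full measure set $X_h \subset X$ on which
\[
\lim_{N\to\infty} \frac{1}{N} \sum_{n=1}^{N} \prod_{i=1}^{k} f_i(T_i^{n+h} x)\overline{f_i(T_i^n x)} = \prod_{i=1}^{k} \int_X f_i\circ T_i^h \cdot \overline{f_i}\,d\mu_i = \langle \tilde T^h F, F\rangle.
\]
Taking $X_0 := \bigcap_{h \in \mathbb{N}} X_h$, this identity holds simultaneously for all $h$ on a full measure set depending only on $f_1,\dots,f_k$.

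Fix $x \in X_0$ and any $\lambda \in \mathbb{T}$. Since $\langle a_{n+h}(x), a_n(x)\rangle_{\mathbb C} = \lambda^h \prod_{i} f_i(T_i^{n+h}x)\overline{f_i(T_i^n x)}$, the preceding gives
\[
\gamma_h(\lambda) := \lim_{N\to\infty} \frac{1}{N}\sum_{n=1}^{N} \langle a_{n+h}(x), a_n(x)\rangle_{\mathbb C} = \lambda^h \langle \tilde T^h F, F\rangle,
\]
so the van der Corput consequence stated after Lemma~\ref{lem:vdC} reduces the proof to showing $\lim_{H\to\infty} \tfrac{1}{H}\sum_{h=1}^{H} \gamma_h(\lambda) = 0$.

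For this I invoke Herglotz's theorem applied to the unitary $\tilde T$ on $L^2(\mu_1 \otimes \cdots \otimes \mu_k)$: there is a positive finite Borel measure $\sigma_F$ on $\mathbb{T}$ with $\langle \tilde T^h F, F\rangle = \int_{\mathbb T} z^h\, d\sigma_F(z)$. By Fubini and bounded convergence,
\[
\lim_{H\to\infty} \frac{1}{H}\sum_{h=1}^{H} \lambda^h \langle \tilde T^h F, F\rangle = \int_{\mathbb T} \lim_{H\to\infty} \frac{1}{H}\sum_{h=1}^{H} (\lambda z)^h\,d\sigma_F(z) = \sigma_F(\{\lambda^{-1}\}).
\]
Any atom of $\sigma_F$ at $\lambda^{-1}$ would make $\lambda^{-1}$ an eigenvalue of $\tilde T$; using $E(\tilde T) = E(T_1,\dots,T_k)$ (quoted before the lemma) and the fact that $E(\tilde T)$ is a group, we conclude $\sigma_F(\{\lambda^{-1}\}) = 0$ whenever $\lambda \notin E(T_1,\dots,T_k)$.

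The delicate point is that $X_0$ must be independent of $\lambda$; this is automatic in the above argument because only countably many applications of pointwise joint ergodicity (one per $h \in \mathbb{N}$) are needed, and the $\lambda$-dependence of $\gamma_h(\lambda)$ factors out cleanly as $\lambda^h$ times the deterministic quantity $\langle \tilde T^h F, F\rangle$, decoupling the spectral input from the pointwise input.
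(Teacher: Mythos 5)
Your argument is correct and follows essentially the same route as the paper's proof: van der Corput applied to $a_n(x)=\lambda^n\prod_{i=1}^k f_i(T_i^nx)$, with pointwise joint ergodicity (invoked once per $h\in\mathbb{N}$, which is what makes $X_0$ independent of $\lambda$) identifying the correlation limits with $\lambda^h\langle F\circ S^h,F\rangle$ for $F=f_1\otimes\cdots\otimes f_k$ on the ergodic product system $S=T_1\times\cdots\times T_k$. The only divergence is the final step --- the paper kills the Ces\`aro average $\frac{1}{H}\sum_{h}\lambda^h\langle F\circ S^h,F\rangle$ via the von Neumann mean ergodic theorem for the isometry $g\mapsto\lambda\,g\circ S$, whereas you use the Herglotz spectral measure of $F$ and the absence of an atom at $\lambda^{-1}$; these are equivalent standard arguments (both ultimately need that $E(S)$ is a group so that $\lambda\notin E(S)$ forces $\lambda^{-1}\notin E(S)$, a point you make explicit and the paper leaves implicit), and your only cosmetic imprecision is calling the Koopman operator of $S$ a unitary when the $T_i$ need not be invertible, which is harmless since Herglotz and the atom-to-eigenvalue argument go through for isometries.
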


\begin{proof}
For $h \in \mathbb{N}$, write $f_{i,h} (x) = f_{i} (T_{i}^{h} x) \overline{f_{i}(x)}$ for $i =1, 2, \dots, k$.
Since ${\bm X}$ is pointwise jointly ergodic, there is a full measure set  $X_{0}\subset X$ such that if $x \in X_{0}$, then
\begin{equation}\label{eqn:pje}
\lim_{N \rightarrow \infty} \frac{1}{N} \sum_{n=1}^{N} f_{1,h} (T_{1}^{n} x) \cdots f_{k,h} (T_{k}^{n} x) = \prod_{i=1}^{k} \int_{X}  f_{i} \circ T_{i}^{h} \cdot \overline{f_{i}} \, d \mu_{i}
\end{equation}
for any $h \in \mathbb{N}$. 

We will apply van der Corput trick for $x_{n} = \lambda^{n} f_{1} (T_{1}^{n} x) \cdots f_{k} (T_{k}^{n} x)$, where $x\in X_{0}$. 
Since 
\[
\langle x_{n+h}, x_{n} \rangle_{\mathbb{C}} = \lambda^{h} \prod_{i=1}^{k} f_{i} (T_{i}^{n+h} x) \overline{f_{i}(T_{i}^{n} x)} = \lambda^{h} \prod_{i=1}^{k} f_{i,h} (T_{i}^{n} x), 
\]
in view of \eqref{eqn:pje} we have 
\begin{equation} \label{eq:vdc0_mostlambda}
\lim_{N \rightarrow \infty} \frac{1}{N} \sum_{n=1}^{N} \langle x_{n+h}, x_{n} \rangle_{\mathbb{C}} = \lim_{N \rightarrow \infty} \frac{\lambda^{h}}{N} \sum_{n=1}^{N} \prod_{i=1}^{k} f_{i,h} (T_{i}^{n} x) = \lambda^{h} \prod_{i=1}^{k} \int_{X} f_{i} \circ T_{i}^{h} \cdot \overline{f_{i}} \, d \mu_{i}. 
\end{equation}
Consider the product system $(X^{k}, S, \nu)$, where $S = T_{1} \times \cdots \times T_{k}\colon X^{k}\to X^{k}$ and $\nu =\otimes_{i=1}^{k} \mu_{i}$. 
Let $F = f_{1} \otimes \cdots \otimes f_{k}\in L^{\infty}(X^{k},\nu)$. 
By \eqref{eq:vdc0_mostlambda}, we have
\begin{equation}\label{eq:vdc1_mostlambda}
\lim_{N \rightarrow \infty} \frac{1}{N} \sum_{n=1}^{N} \langle x_{n+h}, x_{n} \rangle_{\mathbb{C}} =\lambda^{h} \int_{X^{k}}  F \circ S^{h} \cdot \overline{F} \, d \nu =\lambda^{h} \langle F \circ S^{h},F \rangle_{L^{2}(X^{k},\nu)}. 
\end{equation}
Since $E(S) = E(T_{1}, \dots, T_{k})$, one has $\lambda \notin E (S)$ by assumption.  
Define an isometry $U$ on $L^{2}(X^{k}, \nu )$ by $U g = \lambda \cdot g \circ S$. 
Let $I\subset L^{2}(X^{k},\nu)$ be the $U$-invariant subspace, that is, 
\[
I=\left\{g\in L^{2}(X^{k},\nu)\colon Ug = g\right\} = \left\{g\in L^{2}(X^{k},\nu)\colon g\circ S = \overline{\lambda} g\right\}.
\]
Note that $I$ is trivial, since $\lambda \notin E (S)$.
Therefore, by the von Neumann mean ergodic theorem, we have 
\begin{equation}\label{eq:vdc2_mostlambda}
\lim_{H \rightarrow \infty} \frac{1}{H} \sum_{h=1}^{N} \lambda^{h} \cdot F\circ S^{h}=  \lim_{H \rightarrow \infty} \frac{1}{H} \sum_{h=1}^{N} U^{h} F   =  0
\end{equation}
in $L^{2}(X^{k},\nu)$, and hence, in view of \eqref{eq:vdc1_mostlambda} and \eqref{eq:vdc2_mostlambda},
\[ 
\lim_{H \rightarrow \infty} \frac{1}{H} \sum_{h=1}^{H}\lim_{N \rightarrow \infty} \frac{1}{N} \sum_{n=1}^{N} \langle x_{n+h}, x_{n} \rangle_{\mathbb{C}} = \lim_{H \rightarrow \infty} \frac{1}{H} \sum_{h=1}^H \lambda^{h} \langle F \circ S^{h},F \rangle_{L^{2}(X^{k},\nu)} = 0.
\]
Then the desired result follows from Lemma \ref{lem:vdC}. 
\end{proof}

Now we present the proof of Theorem \ref{thm:equiv_multiple_bap}.
\begin{proof}[Proof of Theorem \ref{thm:equiv_multiple_bap}]
 $(1) \Rightarrow (2)$: 
First, we will show that (1) implies that if $\int_{X} f_{i} \, d \mu_{i} = 0$ for some $i\in \{ 1, 2, \dots, k\}$, then 
\[ 
\lim_{N \rightarrow \infty} \frac{1}{N} \sum_{n=1}^{N} f_{1}(T_{1}^{\lfloor \alpha n\rfloor} x) \cdots f_{k} (T_{k}^{\lfloor \alpha n\rfloor} x) = 0,
\]
or equivalently
\[ 
\lim_{N \rightarrow \infty} \frac{1}{N} \sum_{n=1}^{N} \mathbbm{1}_{A}(n)  f_{1}(T_{1}^{n} x) \cdots f_{k} (T_{k}^{n} x) = 0
\]
for almost every $x\in X$, where $A = \{ \lfloor \alpha m\rfloor \colon m \in \mathbb{Z} \}$. 
Let $\beta =1/\alpha$. 
By Lemma \ref{lem:Bap_modify}, for any $\epsilon > 0 $, there exist a trigonometric polynomial $P (n) = \sum\limits_{j=1}^{q} c_{j} \mathrm{e} \left( m_{j} \beta n \right)$ for some $c_{1}, \dots, c_{q}\in \mathbb{C}$ and $m_{1}, \dots, m_{q}\in \mathbb{Z}$ such that 
\[
\limsup\limits_{N \rightarrow \infty} \frac{1}{N} \sum\limits_{n=1}^{N} \left| \mathbbm{1}_{A}(n) - P(n) \right| < \epsilon.
\]
Then it follows that
\[ 
\limsup\limits_{N \rightarrow \infty} \left\vert \frac{1}{N} \sum_{n=1}^{N} \mathbbm{1}_{A}(n)  f_{1}(T_{1}^{n} x) \cdots f_{k} (T_{k}^{n} x)  -  \frac{1}{N} \sum_{n=1}^{N} P(n)  f_{1}(T_{1}^{n} x) \cdots f_{k} (T_{k}^{n} x) \right\vert \leq \epsilon \cdot \prod_{i=1}^{k} \| f_{i}\|_{L^{\infty}}.
\]
Therefore, it remains to show that for any $m \in \mathbb{Z}$, 
\begin{equation}\label{eq:zero_average}
\lim_{N \rightarrow \infty} \frac{1}{N} \sum_{n=1}^{N} \mathrm{e} \left( m\beta n\right) f_{1}(T_{1}^{n} x) \cdots f_{k} (T_{k}^{n} x) = 0
\end{equation}
for almost every $x\in X$. 

If $\mathrm{e} \left( m \beta \right) \in E (T_{1}, \dots, T_{k})$, then $\mathrm{e} \left( m \beta \right)=1$ by the assumption (1), and hence \eqref{eq:zero_average} follows since ${\bm X}$ is pointwise jointly ergodic and $\int_{X} f_{i} \, d \mu_{i} = 0$ for some $i\in \{ 1, 2, \dots, k\}$.
Otherwise, we have \eqref{eq:zero_average} by Lemma \ref{lem:conv:mostlambda}.

Now we consider the general case. 
If we write 
$f_{i}^{(0)} = f_{i} -  \int_{X} f_{i} \, d \mu_{i}$ and $f_{i}^{(1)} = \int_{X} f_{i} \, d \mu_{i}$  for $i = 1, 2, \dots, k$, then
\begin{align}
&\frac{1}{N} \sum_{n=1}^{N}  f_{1}(T_{1}^{\lfloor \alpha n\rfloor} x) \cdots f_{k} (T_{k}^{\lfloor \alpha n\rfloor} x) \notag \\
&\quad = \prod_{i=1}^{k} \int_{X} f_{i}\, d \mu_{i} + \sum_{ \substack {(j_{1}, \dots, j_{k}) \in \{0,1\}^{k} ; \\ (j_{1}, \dots, j_{k}) \ne (1,\dots,1)}} \frac{1}{N}\sum_{n=1}^{N}  f_{1}^{(j_{1})} (T_{1}^{\lfloor \alpha n\rfloor} x) \cdots f_{k}^{(j_{k})} (T_{k}^{\lfloor \alpha n\rfloor} x). \label{eq:decomp_zeroaverage}
\end{align}
Here note that each term $f_{1}^{(j_{1})} (T_{1}^{\lfloor \alpha n\rfloor} x) \cdots f_{k}^{(j_{k})} (T_{k}^{\lfloor \alpha n\rfloor} x)$ in the sum on the second term of \eqref{eq:decomp_zeroaverage} contains at least one $f_{i}^{(j_{i})}$ with $j_{i}=0$. 
Note that $\int_{X} f_{i}^{(0)} \, d \mu_{i} = 0$, hence,  for each $(j_{1}, \dots, j_{k}) \in \{0,1\}^{k}\setminus (1,\dots,1)$, we have 
\[
\lim_{N\to \infty} \frac{1}{N}\sum_{n=1}^{N}  f_{1}^{(j_{1})} (T_{1}^{\lfloor \alpha n\rfloor} x) \cdots f_{k}^{(j_{k})} (T_{k}^{\lfloor \alpha n\rfloor} x)=0
\]
for almost every $x\in X$ as was observed. 
In view of \eqref{eq:decomp_zeroaverage} we have (2).

$(2) \Rightarrow (1)$:
Suppose that (1) does not hold. 
Hence, there exist $m \in \mathbb{Z}\setminus \{0\}$, $\beta_{1}, \dots, \beta_{k} \in \mathbb{R}$ with $\sum_{i=1}^{k} \beta_{i} = m/\alpha \ne 0 \pmod 1$, and eigenfunctions $f_{1}, \dots, f_{k}$ such that $f_{i}(T_{i} x) = \mathrm{e} \left( \beta_{i}\right) f_{i}(x)$ for almost every $x\in X$. 
Note that for every $i\in \{1,2,\dots ,k\}$, the system $(X, \mathcal{B}, \mu, T_{i})$ is ergodic, hence we can assume $\vert f_{i}(x)\vert =1$ for almost every $x\in X$. 
Since $\sum_{i=1}^{k} \beta_{i} \ne 0 \pmod 1$, one has $\mathrm{e} \left( \beta_{i}\right) \ne 1$ for some $i\in \{1,\dots,k\}$, and thus  $\int_{X} f_{i}\, d\mu_{i} = 0$. 
Hence, by (2), 
\[ 
\lim_{N \rightarrow \infty} \frac{1}{N} \sum_{n=1}^{N}  f_{1}(T_{1}^{\lfloor \alpha n\rfloor} x) \cdots f_{k} (T_{k}^{\lfloor \alpha n\rfloor} x) = \prod_{i=1}^{k} \int_{X} f_{i} \,d\mu_{i} = 0 
\]
for almost every $x\in X$. 
However, since 
\[
\prod_{i=1}^{k} f_{i} (T_{i}^{\lfloor \alpha n\rfloor}x) = \prod_{i=1}^{k} \mathrm{e} \left( \lfloor \alpha n\rfloor \beta_{i}\right) f_{i} (x) = \mathrm{e} \left( \lfloor \alpha n\rfloor \frac{m}{\alpha}  \right) \cdot \prod_{i=1}^{k} f_{i}(x)
\]
and $\prod_{i=1}^{k} f_{i}(x)\neq 0$ for almost every $x\in X$, 
\[ 
\lim_{N \rightarrow \infty} \frac{1}{N} \sum_{n=1}^{N} \prod_{i=1}^{k} f_{i} (T_{i}^{\lfloor \alpha n\rfloor}x) 
= \left[ \lim_{N \rightarrow \infty} \frac{1}{N} \sum_{n=1}^{N} \mathrm{e} \left( \lfloor \alpha n\rfloor \frac{m}{\alpha}  \right) \right] \cdot \prod_{i=1}^{k} f_{i}(x)  \ne 0
\] 
by Lemma \ref{lem:unif_distr}, and we have a contradiction. 
\end{proof}

\section*{Acknowledgments} 
MH was partially supported by the Japan Society for the Promotion of Science (JSPS) KAKENHI Grant Number 19K03558. 
YS was supported by the National Research Foundation of Korea (NRF grant number: 2020R1A2C1A01005446) and Basic Science Research Institute Fund (NRF grant number: 2021R1A6A1A10042944).

\begin{bibdiv}
\begin{biblist}

\bib{Assani-Moore2018}{article}{
   author={Assani, Idris},
   author={Moore, Ryo},
   title={Extension of Wiener-Wintner double recurrence theorem to
   polynomials},
   journal={J. Anal. Math.},
   volume={134},
   date={2018},
   number={2},
   pages={597--613},
   issn={0021-7670},
   review={\MR{3771493}},
   doi={10.1007/s11854-018-0019-x},
}

\bib{Bellow-Losert1985}{article}{
   author={Bellow, A.},
   author={Losert, V.},
   title={The weighted pointwise ergodic theorem and the individual ergodic
   theorem along subsequences},
   journal={Trans. Amer. Math. Soc.},
   volume={288},
   date={1985},
   number={1},
   pages={307--345},
   issn={0002-9947},
   review={\MR{0773063}},
   doi={10.2307/2000442},
}

\bib{Berend1985}{article}{
   author={Berend, Daniel},
   title={Joint ergodicity and mixing},
   journal={J. Analyse Math.},
   volume={45},
   date={1985},
   pages={255--284},
   issn={0021-7670},
   review={\MR{833414}},
   doi={10.1007/BF02792552},
}

\bib{Berend-Bergelson1984}{article}{
   author={Berend, Daniel},
   author={Bergelson, Vitaly},
   title={Jointly ergodic measure-preserving transformations},
   journal={Israel J. Math.},
   volume={49},
   date={1984},
   number={4},
   pages={307--314},
   issn={0021-2172},
   review={\MR{788255}},
   doi={10.1007/BF02760955},
}

\bib{Berend-Bergelson1986}{article}{
   author={Berend, Daniel},
   author={Bergelson, Vitaly},
   title={Characterization of joint ergodicity for noncommuting
   transformations},
   journal={Israel J. Math.},
   volume={56},
   date={1986},
   number={1},
   pages={123--128},
   issn={0021-2172},
   review={\MR{879919}},
   doi={10.1007/BF02776245},
}

\bib{Bergelson-Moreira2016}{article}{
   author={Bergelson, Vitaly},
   author={Moreira, Joel},
   title={Van der Corput's difference theorem: some modern developments},
   journal={Indag. Math. (N.S.)},
   volume={27},
   date={2016},
   number={2},
   pages={437--479},
   issn={0019-3577},
   review={\MR{3479166}},
   doi={10.1016/j.indag.2015.10.014},
}
		
\bib{Bergelson-Son2023}{article}{
   author={Bergelson, Vitaly},
   author={Son, Younghwan},
   title={Joint ergodicity of piecewise monotone interval maps},
   journal={Nonlinearity},
   volume={36},
   date={2023},
   number={6},
   pages={3376--3418},
   issn={0951-7715},
   review={\MR{4594746}},
}

\bib{Bergelson-Son2022}{article}{
   author={Bergelson, Vitaly},
   author={Son, Youngwan},
   title={Joint normality of representations of numbers: an ergodic approach},
 journal = {accepted for publication in Ann. Sc. Norm. Super. Pisa Cl. Sci. (5)}
}

\bib{Besicovitch1955}{book}{
   author={Besicovitch, A. S.},
   title={Almost periodic functions},
   publisher={Dover Publications, Inc., New York},
   date={1955},
   pages={xiii+180},
   review={\MR{0068029}},
}

\bib{Bourgain1990}{article}{
   author={Bourgain, J.},
   title={Double recurrence and almost sure convergence},
   journal={J. Reine Angew. Math.},
   volume={404},
   date={1990},
   pages={140--161},
   issn={0075-4102},
   review={\MR{1037434}},
   doi={10.1515/crll.1990.404.140},
}

\bib{Chu2009}{article}{
   author={Chu, Qing},
   title={Convergence of weighted polynomial multiple ergodic averages},
   journal={Proc. Amer. Math. Soc.},
   volume={137},
   date={2009},
   number={4},
   pages={1363--1369},
   issn={0002-9939},
   review={\MR{2465660}},
   doi={10.1090/S0002-9939-08-09614-7},
}

\bib{Derrien-Lesigne1996}{article}{
   author={Derrien, Jean-Marc},
   author={Lesigne, Emmanuel},
   title={Un th\'{e}or\`eme ergodique polynomial ponctuel pour les
   endomorphismes exacts et les $K$-syst\`emes},
   language={French, with English and French summaries},
   journal={Ann. Inst. H. Poincar\'{e} Probab. Statist.},
   volume={32},
   date={1996},
   number={6},
   pages={765--778},
   issn={0246-0203},
   review={\MR{1422310}},
}

\bib{Furstenberg1967}{article}{
   author={Furstenberg, Harry},
   title={Disjointness in ergodic theory, minimal sets, and a problem in
   Diophantine approximation},
   journal={Math. Systems Theory},
   volume={1},
   date={1967},
   pages={1--49},
   issn={0025-5661},
   review={\MR{0213508}},
   doi={10.1007/BF01692494},
}

\bib{Furstenberg1981}{book}{
   author={Furstenberg, H.},
   title={Recurrence in ergodic theory and combinatorial number theory},
   note={M. B. Porter Lectures},
   publisher={Princeton University Press, Princeton, NJ},
   date={1981},
   pages={xi+203},
   isbn={0-691-08269-3},
   review={\MR{0603625}},
}

\bib{Host-Kra2005}{article}{
   author={Host, Bernard},
   author={Kra, Bryna},
   title={Nonconventional ergodic averages and nilmanifolds},
   journal={Ann. of Math. (2)},
   volume={161},
   date={2005},
   number={1},
   pages={397--488},
   issn={0003-486X},
   review={\MR{2150389}},
   doi={10.4007/annals.2005.161.397},
}

\bib{Host-Kra2009}{article}{
   author={Host, Bernard},
   author={Kra, Bryna},
   title={Uniformity seminorms on $\ell^\infty$ and applications},
   journal={J. Anal. Math.},
   volume={108},
   date={2009},
   pages={219--276},
   issn={0021-7670},
   review={\MR{2544760}},
   doi={10.1007/s11854-009-0024-1},
}

\bib{Host-Kra2018}{book}{
   author={Host, Bernard},
   author={Kra, Bryna},
   title={Nilpotent structures in ergodic theory},
   series={Mathematical Surveys and Monographs},
   volume={236},
   publisher={American Mathematical Society, Providence, RI},
   date={2018},
   pages={X+427},
   isbn={978-1-4704-4780-9},
   review={\MR{3839640}},
   doi={10.1090/surv/236},
}

\bib{Kuipers-Niederreiter1974}{book}{
   author={Kuipers, L.},
   author={Niederreiter, H.},
   title={Uniform distribution of sequences},
   series={Pure and Applied Mathematics},
   publisher={Wiley-Interscience [John Wiley \& Sons], New
   York-London-Sydney},
   date={1974},
   pages={xiv+390},
   review={\MR{0419394}},
}

\bib{Lesigne1990}{article}{
   author={Lesigne, E.},
   title={Un th\'{e}or\`eme de disjonction de syst\`emes dynamiques et une
   g\'{e}n\'{e}ralisation du th\'{e}or\`eme ergodique de Wiener-Wintner},
   language={French, with English summary},
   journal={Ergodic Theory Dynam. Systems},
   volume={10},
   date={1990},
   number={3},
   pages={513--521},
   issn={0143-3857},
   review={\MR{1074316}},
   doi={10.1017/S014338570000571X},
}

\bib{Rudolph1990}{book}{
   author={Rudolph, Daniel J.},
   title={Fundamentals of measurable dynamics},
   series={Oxford Science Publications},
   note={Ergodic theory on Lebesgue spaces},
   publisher={The Clarendon Press, Oxford University Press, New York},
   date={1990},
   pages={x+168},
   isbn={0-19-853572-4},
   review={\MR{1086631}},
}

\bib{Wiener-Wintner1941}{article}{
   author={Wiener, Norbert},
   author={Wintner, Aurel},
   title={Harmonic analysis and ergodic theory},
   journal={Amer. J. Math.},
   volume={63},
   date={1941},
   pages={415--426},
   issn={0002-9327},
   review={\MR{0004098}},
   doi={10.2307/2371534},
}

\bib{Xiao2024}{article}{
   author={Xiao, Rongzhong},
   title={Multilinear Wiener-Wintner type ergodic averages and its
   application},
   journal={Discrete Contin. Dyn. Syst.},
   volume={44},
   date={2024},
   number={2},
   pages={425--446},
   issn={1078-0947},
   review={\MR{4671529}},
   doi={10.3934/dcds.2023109},
}

\bib{Zorin-Kranich2015}{article}{
   author={Pavel Zorin-Kranich},
   title={A uniform nilsequence Wiener-Wintner theorem for bilinear ergodic averages},
 journal = {arXiv:1504.04647, 2015}
}

\end{biblist}
\end{bibdiv} 
\end{document}